\def\marginpar#1{\ignorespaces}
\newcommand{\CC} {\mathcal{C}}
\newcommand{\MM} {\mathcal{M}}
\newcommand{\discr} {{\text{\rm finite}}}
\newcommand{\ext} {{\text{\rm ext}}}
\newcommand{\FF} {\mathcal{F}}
\newcommand{\mn} {{m \times n}}
\newcommand{\onebyn} {{1 \times n}}
\newcommand{\twobytwo} {{2 \times 2}}
\newcommand{\ZZ} {X_*}
\newcommand{\hf} {\mbox{$\frac{1}{2}$}}
\newcommand{\thf} {\mbox{$\frac{3}{2}$}}
\newcommand{\quart} {\mbox{$\frac{1}{4}$}}
\newcommand{\tquart} {\mbox{$\frac{3}{4}$}}
\newcommand{\BB} {\mathcal{B}}
\newcommand{\GG} {\mathcal{G}}
\newcommand{\HH} {\mathcal{H} }
\newcommand{\reals} {\mathbb{R} }
\newcommand{\giv}{{\,|\,}}
\newcommand{\ind}{\mathds{1}}
\newcommand{\eps}{ \varepsilon }
\newcommand{\ed}{ \stackrel{d}{=} }
\newcommand{\led}{ \le _d }
\newcommand{\convd} {\stackrel{d}{\to}}
\newtheorem{theorem}{Theorem}[section]
\newtheorem{lemma}[theorem]{Lemma}
\newtheorem{proposition}[theorem]{Proposition}
\newtheorem{corollary}[theorem]{Corollary}
\newtheorem{conjecture}[theorem]{Conjecture}
\newtheorem{example}[theorem]{Example}
\newtheorem{openpb}[theorem]{Problem}
\newtheorem{claim}[theorem]{Claim}
\numberwithin{equation}{section}
\def\feasible{coherent}
\begin{document}
\title[Bounds on the probability of radically different opinions]{Bounds on the probability of radically different opinions}
\author[Burdzy and Pitman]{Krzysztof Burdzy and Jim Pitman}
\address{K.B.: Department of Mathematics, University of Washington, Seattle, WA 98195} 
\email{burdzy@uw.edu}
\address{J.P.: Departments of Statistics and Mathematics, University of California, Berkeley, CA 94720} 
\email{pitman@berkeley.edu}
\thanks{Research of KB was supported in part by Simons Foundation Grant 506732. }
\begin{abstract}
We establish bounds on the probability that two different agents, who share an initial opinion 
expressed as a probability distribution on an abstract probability space, given two different sources of  information,
may come to radically different opinions regarding the conditional probability of the same event.
\end{abstract}
\maketitle
\textit{Key words:} Conditional probability, opinion, maximal inequality, joint distribution of conditional expectations

\textit{AMS 2010 Mathematics Subject Classification: 60E15}
\section{Introduction}
\label{intro}

Let $A \in \FF$ be an event in some probability space $(\Omega, \FF, P )$, and let 
\begin{equation}
\label{XGYH}
X = P(A \giv \GG) \qquad \mbox{and} \qquad Y = P(A \giv \HH)
\end{equation}
for two sub-$\sigma$-fields $\GG, \HH \subseteq \FF$. Equivalently, $X$ and $Y$ are random variables
with 
\begin{equation}
\label{XYp}
0 \le X, Y \le 1 \mbox{ and } X = P(A \giv X) \mbox{ and } Y = P(A \giv Y),  \mbox{ hence } E X =  E Y = P(A) = p 
\end{equation}
for some $p \in [0,1]$.
Following \cite{Dawid1995},  
we interpret $X$ and $Y$ as the opinions of two experts about the probability of $A$ given different sources of information $\GG$ and $\HH$,
assuming the experts agree on some initial assignment of probability $P$ to events in $\FF$.
We use the term {\em \feasible{}}, 
as in \cite{Dawid1995},  
for $(X,Y)$ as in \eqref{XGYH} or \eqref{XYp}, or for the joint distribution of such $(X,Y)$ on $[0,1]^2$.
This term has been used with several other meanings in the theory of subjective probability, risk assessment, and reliability.
But we use it here only in the sense above, for two or more conditional probabilities of some common event
in a probability space. 
Note the obvious {\em reflection symmetry} that
\begin{equation}
\label{symm:ref}
\mbox{ if $(X,Y)$ is \feasible\ then so are } (Y,X), (1-X,1-Y), \mbox{ and } (1-Y,1-X).
\end{equation}
Coherent opinions $(X, Y, \ldots)$  based on 
information represented by an increasing sequence of $\sigma$-fields
form a martingale.
The notion of a \feasible{} family of random variables also includes reversed martingales, and martingales relative to a directed index set
\cite{MR553386,MR1914748}. 

As remarked in \cite[p.284]{Dawid1995}, with just a change of notation $(X,Y) \leftrightarrow (\Pi_1,\Pi_2)$, 
\begin{quote}
If $X$ and $Y$ are both produced by ``experts'', then one should not expect them to be wildly different. For example, it would seem paradoxical if, with $X$ say uniform on $[0,1]$,
one always had $Y = 1 - X$. This suggests that not all joint distributions on $[0,1]^2$ for $(X,Y)$ are \feasible.
\end{quote}
Indeed, it follows easily from Proposition \ref{prp:jk} below
that 
\begin{itemize}
\item the distribution of $(X, 1-X)$ is \feasible{} iff $P(X = \hf) = 1$; 
\item 
for non-constant \feasible{} $X$ and $Y$, the correlation $\rho(X,Y) \ne -1$.
\end{itemize}
This suggests the rough idea that \feasible{} opinions cannot be too negatively dependent. However, 
elementary examples in \cite[\S 4.1]{Dawid1995} 
show that for any prescribed value of $EX = EY = P(A) \in (0,1)$, the correlation 
between \feasible{} opinions $X$ and $Y$ about $A$ can take
any value in $(-1,1]$. 
Consider for instance, for $\delta \in (0,1)$, the distribution of $(X,Y)$
concentrated on the three points $(1-\delta, 1 - \delta)$ and $(0, 1-\delta)$ and $(1-\delta,0)$, with 
\begin{equation}
\label{deldis}
P(X=Y) = P(1-\delta,1-\delta) = \frac{ 1 - \delta}{ 1 + \delta } \qquad \mbox{and} \qquad P(0, 1-\delta)= P(1-\delta,0) = \frac{ \delta}{ 1 + \delta }  .
\end{equation}
This example from \cite{MR553386} 
gives a pair of coherent opinions $(X,Y)$ about the event $A = (X = Y)$, with
correlation $\rho(X,Y) = - \delta$ which can be any value in $(-1,0)$.

The idea expressed above, that coherent opinions $X$ and $Y$ should not be too radically different,
leads to the following precise problem, posed in \cite{KB_S} and \cite{pitman14}:
for $0 \le \delta \le 1$, evaluate
\begin{equation}
\label{xybound}
\eps(\delta) :=
\sup_{\text{\feasible{} } (X,Y)}
P( | X-Y| \ge 1 - \delta ) 
= \sup_{\text{\feasible{} } (X,Y)}
P(1- | X-Y| \le  \delta ).
\end{equation}
For $m,n = 1, 2, 3, \ldots$ consider also $\eps_{\mn}(\delta) = \eps_{n \times m}(\delta)$
defined by restricting the above supremum to $\mn$
\feasible\ $(X,Y)$, meaning that $X$ takes at most $m$ and $Y$ at most $n$ possible values.  
Let $\eps_\discr (\delta):= \sup_{ m , n } \eps_{\mn}(\delta)$, which is the supremum  in \eqref{xybound}
restricted
to $(X,Y)$ with a finite number of possible values. Each of these functions of $\delta$ is evidently 
non-decreasing and bounded above by $1$. 
Then  for all $\delta \in [0,1]$
\begin{equation}
\label{mneps}
\frac{2 \delta} { 1 + \delta} \, \le  \, \eps_{\twobytwo}(\delta) \, \le \, \eps_\discr (\delta) \, \le \, \eps(\delta)  \, \le \lim_{a \downarrow \delta} \eps_\discr(a).
\end{equation}
The first inequality is due to the example \eqref{deldis}. The second and third are obvious, and
the last is by elementary construction of $n \times n$ \feasible\ $(X_n,Y_n)$ with $|X_n - X| + |Y_n - Y| \le 2/n$ for any
coherent $(X,Y)$.
We use the notation $x \wedge y:= \min(x,y)$ and $x \vee y:= \max(x,y)$, and either $\ind_A$ or $\ind(A)$ for an indicator function whose value is $1$ if $A$ and $0$ else.

\begin{proposition}
\label{prp:bounds}
There are the following evaluations and bounds: for $\delta \in [0,1]$ and
$n \ge 2$, 
\label{prp:12}
\begin{align}
\label{eps1} \eps_{\onebyn} (\delta) &= \,\,\,\,\,\,\delta  \,\,\,\,\,\mbox{ if } \delta \in [0,\hf) \mbox{  and } 1 \mbox{ if } \delta \in [\hf , 1],\\
\label{eps22} \eps_{\twobytwo} (\delta) &= \frac{ 2 \delta }{ 1 + \delta } \mbox{ if } \delta \in [0,\hf) \mbox{  and } 1 \mbox{ if } \delta \in [\hf , 1],\\
\label{bounds} \eps_{\twobytwo} (\delta) &\le \,\, \eps(\delta) \,\, \le \,\, ( 2 \delta ) \wedge 1.
\end{align}
\end{proposition}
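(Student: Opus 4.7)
The plan is to prove the three parts in sequence, using progressively more of the coherence structure. For \eqref{eps1}, since $X$ takes at most one value, $X \equiv p$ is constant and the task is to bound $P(|Y - p| \ge 1 - \delta)$ over coherent $Y$ with $EY = p$. For $\delta < \hf$ the event $\{|Y - p| \ge 1-\delta\}$ splits into $\{Y \ge p + 1-\delta\}$ and $\{Y \le p - (1-\delta)\}$, and since $Y \in [0,1]$ the first is empty unless $p \le \delta$ and the second unless $p \ge 1-\delta$. A one-sided Markov argument on the nonempty piece, optimized over $p$, yields the bound $\delta$, attained by $Y$ Bernoulli with mean $\delta$ and $A = \{Y = 1\}$. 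For $\delta \ge \hf$ the choice $p = \hf$ with $Y \in \{0, 1\}$ uniform achieves~$1$.

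For \eqref{eps22} the lower bound is \eqref{deldis}. For the upper bound, parametrize $X \in \{x_0, x_1\}$ with $x_0 \le x_1$, $Y \in \{y_0, y_1\}$ with $y_0 \le y_1$, set $q_{ij} = P(X = x_i, Y = y_j)$, and note that coherence amounts to the existence of $r_{ij} \in [0, q_{ij}]$ with row sums $x_i(q_{i0} + q_{i1})$ and column sums $y_j(q_{0j} + q_{1j})$. Case-analyze which atoms $(x_i, y_j)$ satisfy $|x_i - y_j| \ge 1-\delta$: if a diagonal atom is big, the ordering and $EX = EY = p$ force all big atoms to lie in a single row or column, so the bad event reduces to a marginal event of probability at most $\delta$; otherwise only the off-diagonals $(x_0, y_1)$ and $(x_1, y_0)$ are big, with $x_0, y_0 \in [0, \delta]$ and $x_1, y_1 \in [1-\delta, 1]$, and a direct linear optimization of $q_{01} + q_{10}$ subject to the coherence constraints yields the maximum $2\delta/(1+\delta)$, attained at $x_0 = y_0 = 0$, $x_1 = y_1 = 1-\delta$ as in \eqref{deldis}. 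Since $\delta \le 2\delta/(1+\delta)$, this gives the bound for $\delta < \hf$; the case $\delta \ge \hf$ is handled by the same example with $\delta = \hf$.

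For \eqref{bounds} the first inequality is \eqref{mneps}. For the second, the case $\delta \ge \hf$ is trivial, so assume $\delta < \hf$ and set
\[
B_+ := \{X \ge 1-\delta,\, Y \le \delta\}, \qquad B_- := \{X \le \delta,\, Y \ge 1-\delta\},
\]
which are disjoint and whose union contains $\{|X - Y| \ge 1-\delta\}$. Split each $B_\pm$ into its $A$- and $A^c$-parts. Since $\{Y \le \delta\}$ is $\HH$-measurable, coherence gives
\[
P(A \cap B_+) \le E[\ind_A \ind(Y \le \delta)] = E[Y \ind(Y \le \delta)] \le \delta\, P(Y \le \delta),
\]
and symmetrically $P(A^c \cap B_+) \le E[(1-X) \ind(X \ge 1-\delta)] \le \delta\, P(X \ge 1-\delta)$. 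The analogous bounds on $P(A \cap B_-)$ and $P(A^c \cap B_-)$ use $\{Y \ge 1-\delta\} \in \HH$ and $\{X \le \delta\} \in \GG$. Summing the four pieces and grouping the complementary marginal tails $P(Y \le \delta) + P(Y \ge 1-\delta) \le 1$ and $P(X \le \delta) + P(X \ge 1-\delta) \le 1$ gives $P(|X - Y| \ge 1-\delta) \le 2\delta$.

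The main obstacle is \eqref{eps22}: the case analysis to classify which atoms can carry the bad event and the subsequent explicit optimization over $q_{ij}$ and $r_{ij}$ are technical and easy to mis-state, whereas \eqref{eps1} reduces to a one-sided Markov inequality and \eqref{bounds} to the single clean decomposition $B_+ \sqcup B_-$ paired with the $A/A^c$ split and the two coherence identities $E[\ind_A f(Y)] = E[Yf(Y)]$ and $E[\ind_{A^c} g(X)] = E[(1-X)g(X)]$.
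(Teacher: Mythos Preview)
Your arguments for \eqref{eps1} and for \eqref{bounds} are correct and coincide with the paper's proofs (Lemma~\ref{m17.1} and the $B_+/B_-$ decomposition, respectively). For \eqref{eps22} your case split---a diagonal atom big versus only the two off-diagonal atoms big---also matches the paper's geometric split via the corner triangles $T,T'$, and your reduction of the diagonal case to a marginal event bounded via \eqref{eps1} is exactly the paper's reduction to a $2\times 1$ pair $(X,EY)$.

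The genuine difference is in the off-diagonal case. You invoke a ``direct linear optimization'' over the $q_{ij}$ and the auxiliary $r_{ij}$, with the support points $x_0,y_0\in[0,\delta]$ and $x_1,y_1\in[1-\delta,1]$ also free; you assert the outcome $2\delta/(1+\delta)$ but do not carry out this parametric optimization, which is the whole content of the bound. The paper instead proves a general lemma: for any events $G,H$ of positive probability and any $A$,
\[
|P(A\giv G)-P(A\giv H)|\;\le\;P(G\triangle H\giv G\cup H)\;=\;1-\frac{P(GH)}{P(G)+P(H)-P(GH)},
\]
which rearranges to $P(GH)\le\frac{\delta}{1+\delta}\bigl(P(G)+P(H)\bigr)$ whenever the left side is at least $1-\delta$. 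Applied once with $G=(X=x_0),\,H=(Y=y_1)$ and once with $G=(X=x_1),\,H=(Y=y_0)$, this gives $q_{01}+q_{10}\le\frac{\delta}{1+\delta}\cdot 2$ in one line, with no optimization over $x_i,y_j$ needed. The paper's route thus buys both a clean finish here and a more general statement (Lemma~\ref{lmm:delta}): the bound $2\delta/(1+\delta)$ holds for \emph{any} discrete coherent $(X,Y)$ whose pairs $(x,y)$ with $|x-y|\ge 1-\delta$ share no $x$- or $y$-coordinate, not just in the $2\times 2$ case.
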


\begin{figure} \includegraphics[width=0.45\linewidth]{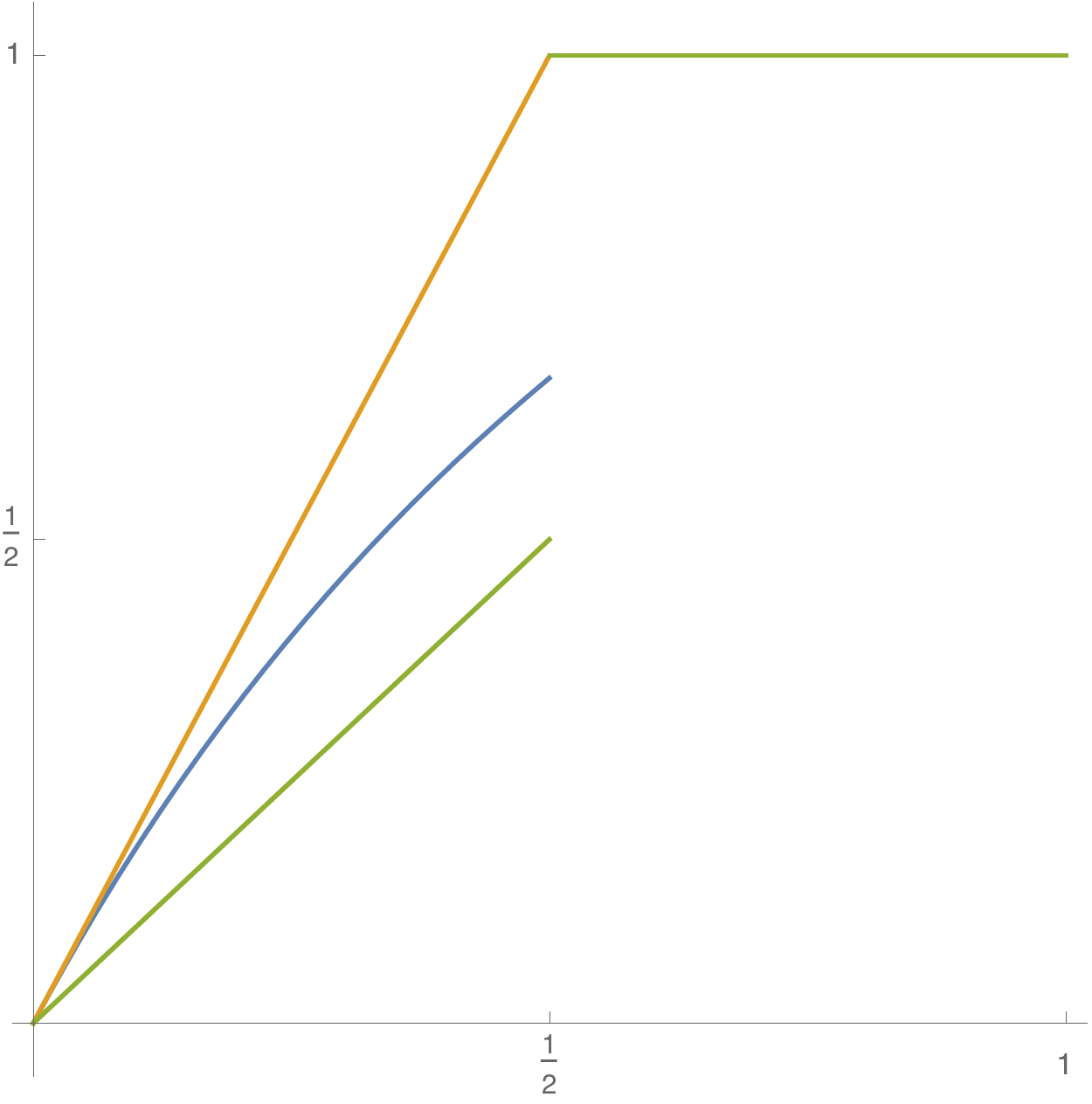}
\label{fig:fig1}
\caption{
Graphs of $\eps_{1 \times n}(\delta) \le \eps_{\twobytwo}(\delta) \le (2 \delta) \wedge 1$ in Proposition \ref{prp:bounds},
for $\delta \in [0,1]$.
}
\end{figure}

The bounds \eqref{mneps} and \eqref{bounds} were given in 
\cite[Theorem 14.1]{KB_S}, 
\cite{pitman14} 
and \cite[Theorem 18.1]{KB_R}, while \eqref{eps1} and \eqref{eps22} are new.
Our renewed interest in these results is prompted by 
\begin{claim}
{\em \cite{burdzy19}}
\label{clm:burdzy19}
$\qquad \eps_{\twobytwo} (\delta) = \eps_\discr (\delta) = \eps(\delta) \qquad$
for all $\delta \in [0,1]$.
\end{claim}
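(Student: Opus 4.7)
By the chain $\eps_{\twobytwo}(\delta) \le \eps_\discr(\delta) \le \eps(\delta) \le \lim_{a \downarrow \delta} \eps_\discr(a)$ in \eqref{mneps} and the right-continuity of $\delta \mapsto \frac{2\delta}{1+\delta}$ on $[0, \hf)$, it is enough to prove $\eps_\discr(\delta) \le \eps_{\twobytwo}(\delta)$ for every $\delta \in [0, \hf)$; the case $\delta \ge \hf$ is trivial because $\eps_{\twobytwo}(\delta) = 1$. Thus the goal reduces to showing that every coherent $(X,Y)$ taking finitely many values satisfies $P(|X-Y| \ge 1-\delta) \le \frac{2\delta}{1+\delta}$.

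\textbf{Approach via extreme points.} Fix finite supports $\{x_i\}_{i=1}^m, \{y_j\}_{j=1}^n$. The coherent joint laws on these supports form a convex polytope in the variables $(p_{ij}, q_{ij})$ with $p_{ij} = P(X=x_i, Y=y_j) \ge 0$ and $q_{ij} = P(A \giv X=x_i,Y=y_j) \in [0,1]$, cut out by the coherence equations $\sum_j p_{ij}(q_{ij} - x_i) = 0$ and $\sum_i p_{ij}(q_{ij}-y_j) = 0$, together with $\sum_{i,j} p_{ij} = 1$. Since the objective $\sum_{|x_i - y_j| \ge 1-\delta} p_{ij}$ is linear, the supremum in $\eps_{\mn}(\delta)$ is attained at an extreme point of this polytope, and the overall $\eps_\discr(\delta)$ is the supremum over $m, n$. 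The target is to show that such extremizers can always be taken with $m = n = 2$. The natural strategy is a reduction step: given an $(m \times n)$-extremizer with $m + n > 4$, produce a coherent modification with strictly smaller support and objective at least as large, by merging close values of $X$ (or $Y$) or moving mass along an edge of the polytope. Iterating collapses the configuration to a $2 \times 2$ coherent pair, at which point Proposition \ref{prp:bounds} finishes the argument.

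\textbf{Main obstacle.} The key difficulty is that the event $\{|X - Y| \ge 1-\delta\}$ does not factor through simple coarsenings of $X$ or $Y$ alone: merging two values $x_1 < x_2$ of $X$ into a single value $x^* \in (x_1, x_2)$ replaces the fibers $\{y : y \ge x_1 + (1-\delta)\}$ and $\{y : y \ge x_2 + (1-\delta)\}$ by the single fiber $\{y : y \ge x^* + (1-\delta)\}$, so the objective can move in either direction. A separate analytic route is LP duality: exhibit $\phi_1, \phi_2 : [0,1] \to \reals$ such that
\[
\ind(|x-y| \ge 1-\delta) \le \tfrac{2\delta}{1+\delta} + \phi_1(x)(a-x) + \phi_2(y)(a-y) \qquad \forall (x,y,a) \in [0,1]^2 \times \{0,1\},
\]
since integrating against the law of $(X, Y, \ind_A)$ and using $E[\phi_1(X)(\ind_A - X)] = E[\phi_2(Y)(\ind_A - Y)] = 0$ (from coherence) would give the bound immediately. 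A symmetric piecewise-linear ansatz with $\phi_1 = \phi_2 = \phi$, $\phi(0) = -\tfrac{1}{1+\delta}$, $\phi(1-\delta) = \tfrac{1}{1+\delta}$ matches the extremizer of \eqref{deldis} at the three atoms with positive mass but fails at $(x,y,a) = (1-\delta, 1-\delta, 0)$, indicating that the dual certificate must be asymmetric or incorporate additional multipliers for the bound constraints $0 \le q_{ij} \le 1$. Constructing either the reducing modification or the dual witness is the hard part of the proof.
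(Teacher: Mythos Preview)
Your proposal is not a proof but a sketch of two strategies, neither of which you carry to completion. You correctly reduce the claim, via \eqref{mneps} and the explicit formula \eqref{eps22}, to showing $\eps_\discr(\delta)\le \frac{2\delta}{1+\delta}$ for $\delta\in(0,\hf)$, and you correctly observe that a linear objective over the set of $m\times n$ coherent laws is maximized at an extreme point. But the crucial step---producing, from any $m\times n$ extremizer with $m+n>4$, a coherent law of strictly smaller support and no smaller objective---is precisely the content of the claim, and you do not construct any such modification. You yourself note that naive merging of two $X$-values can move the objective in either direction; you stop at this obstacle. Your alternative LP-duality route is equally incomplete: you write down a symmetric piecewise-linear dual ansatz, check that it fails at $(x,y,a)=(1-\delta,1-\delta,0)$, and conclude only that a valid certificate ``must be asymmetric or incorporate additional multipliers''. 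No certificate is produced. (A minor technical point: as written, your ``polytope'' in the variables $(p_{ij},q_{ij})$ has bilinear constraints; the linearization $r_{ij}=p_{ij}q_{ij}$ with $0\le r_{ij}\le p_{ij}$ is what makes it genuinely polyhedral.)

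For comparison, the present paper does not prove Claim~\ref{clm:burdzy19} at all: it is quoted from \cite{burdzy19}, whose argument the authors describe as ``quite long and difficult, by recursive reduction of $m$ and $n$ for $m\times n$ coherent $(X,Y)$, until the problem is reduced to the $\twobytwo$ case''. So your extreme-point reduction strategy is exactly the route taken in \cite{burdzy19}; what is missing from your write-up is the entire substance of that argument---the actual mechanism for decreasing $m+n$ without decreasing $P(|X-Y|\ge 1-\delta)$. The paper lists finding a simpler proof as Problem~\ref{open:main}, so absent either the full recursive construction of \cite{burdzy19} or a working dual witness, your proposal does not establish the claim.
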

Proposition \ref{prp:12} implies that
this identity   
holds with all values $0$ for $\delta =0$, 
and all values $1$ for  $\delta \in [\hf,1]$.
The evaluations 
for $\delta \in [\hf,1]$ 
come from the \feasible\ $1\times 2$ distribution of $(X,Y)$ 
with equal probability $\hf$ at the points $(\hf,0)$ and $(\hf,1) \in [0,1]^2$.
That is
\begin{equation}
\label{bernhalf}
X = E(Y) = \hf \mbox{  for } Y = B_{1/2}
\end{equation}
where $B_p$ for $0 \le p \le 1$ denotes a random variable with the {\em Bernoulli$(p)$ distribution} 
\begin{equation}
\label{bernp}
\mbox{ $P(B_p = 1) = p$ and $P(B_p = 0) = 1-p$. }
\end{equation}
For 
$\delta \in (0,\hf)$,  
Claim \ref{clm:burdzy19} is that equality holds in all the easy 
inequalities \eqref{mneps}.  The first of these equalities
is proved here as \eqref{eps22}.  
Equality in the second inequality of \eqref{mneps} for $\delta \in (0,\hf)$ 
is much less obvious.
The proof of this in \cite{burdzy19} is at present quite long and difficult, by
recursive reduction of $m$ and $n$ for $m \times n$ coherent $(X,Y)$, until the problem is reduced to the $\twobytwo$ case
treated here by \eqref{eps22}.
We hope this exposition of the easier evaluations in Proposition \ref{prp:12}
might provoke someone to find a 
simpler proof of Claim \ref{clm:burdzy19}.

Note from \eqref{eps1} and \eqref{eps22} that each of the functions $\eps_{\onebyn}$ and $\eps_{\twobytwo}$ is continuous on each of the intervals $[0,\hf)$ and
$[\hf,1]$, but has an upward jump to $1$ at $\delta = \hf$, as shown in Figure \ref{fig:fig1}.  
If Claim \ref{clm:burdzy19}  is accepted for $\delta \in (0,\hf)$, then $\eps(\delta)$ too jumps up to $1$ at $\hf$.

Some further interpretations of these maximal probability functions,
without assuming Claim \ref{clm:burdzy19}, are presented in the following proposition,
which is a specialization of Corollary \ref{crl:compact} below.
This involves the usual notion of stochastic ordering of real random variables $V$ and $W$,
that is $V \led W$ iff $P( V > x ) \le P( W > x )$ for all real $x$. 
This is well known to be equivalent to existence of a coupling of $V$ and $W$ on a common probability space with $P( V \le W) = 1$,
and again to $E f(V) \le E f(W)$ for all bounded increasing $f$.

\begin{proposition}
\label{prp:attained}
The definition \eqref{xybound} of $\eps(\delta)$ implies that for all \feasible\ $(X,Y)$, 
\begin{equation}
\label{eq:ineq}
P(|X-Y| \ge 1- \delta) \le  \eps(\delta)  = P( \Delta \le  \delta)
\end{equation}
for a random variable $\Delta$ with $0 \le \Delta \le 1/2$. Moreover:
\begin{itemize}
\item For each $\delta \in [0,1]$ there is a \feasible\ $(X,Y)$ which attains equality in \eqref{eq:ineq}.
\item For all coherent $(X,Y)$, $|X-Y| \led 1 - \Delta$. In particular, for $r >0$
\end{itemize}
\begin{equation}
\label{rthmom}
E|X-Y|^r  = \int_0^1 r u^{r-1} P ( |X - Y | \ge u) du  \le \int_0^1 r u^{r-1} \eps(1-u) du \le   \frac{ 2 - 2^{-r}}{1 + r }   .
\end{equation}
For $\mn$ coherent $(X,Y)$, the same
conclusions hold, with the distribution of $\Delta_{\mn}$ on $[0,\hf]$ defined 
by \eqref{eq:ineq} with $\eps_{\mn}(\delta)$ in place of $\eps(\delta)$.
\end{proposition}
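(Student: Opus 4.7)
The plan is to invoke weak compactness of the set of coherent joint distributions on $[0,1]^2$ (the content of Corollary \ref{crl:compact}) together with the Portmanteau theorem to establish that $\eps$ is attained at each $\delta$ and is right-continuous, so that $\eps(\cdot) = P(\Delta \le \cdot)$ for some random variable $\Delta$ taking values in $[0,\hf]$. The stochastic ordering and moment bound then follow by elementary manipulations.

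For attainment at fixed $\delta$, I would choose coherent $(X_k,Y_k)$ with $P(|X_k-Y_k| \ge 1-\delta) \to \eps(\delta)$, pass to a weakly convergent subsequence with coherent limit $(X_\infty,Y_\infty)$, and apply Portmanteau to the closed set $\{(x,y):|x-y| \ge 1-\delta\}$ to conclude $P(|X_\infty-Y_\infty| \ge 1-\delta) \ge \eps(\delta)$; coherence of $(X_\infty,Y_\infty)$ forces the reverse inequality, hence equality. For right-continuity at $\delta$, take $\delta_k \downarrow \delta$, let $(X_k,Y_k)$ attain $\eps(\delta_k)$, and extract a weakly convergent subsequence with coherent limit $(X,Y)$. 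Since $\delta_k \le \delta_j$ for $k \ge j$ gives $\{|x-y|\ge 1-\delta_k\}\subseteq\{|x-y|\ge 1-\delta_j\}$, Portmanteau applied to the closed set $\{|x-y|\ge 1-\delta_j\}$ yields $P(|X-Y|\ge 1-\delta_j) \ge \lim_k \eps(\delta_k)$; letting $j \to \infty$ and using continuity of $P$ along the decreasing family gives $\eps(\delta) \ge P(|X-Y|\ge 1-\delta) \ge \lim_k \eps(\delta_k)$. Combined with monotonicity, $\eps(0)=0$ from \eqref{bounds}, and $\eps(\hf)=1$ from \eqref{eps22}, this identifies $\eps$ as the distribution function of some $\Delta$ supported on $[0,\hf]$.

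The stochastic ordering $|X-Y| \led 1-\Delta$ then follows from
\begin{equation*}
P(|X-Y|>u) \;=\; \lim_{\eta\downarrow 0} P(|X-Y|\ge u+\eta) \;\le\; \lim_{\eta\downarrow 0} \eps(1-u-\eta) \;=\; P(\Delta<1-u) \;=\; P(1-\Delta>u).
\end{equation*}
The moment estimate \eqref{rthmom} comes from the layer-cake identity combined with the substitution $v=1-u$, which converts $\int_0^1 r u^{r-1} \eps(1-u)\,du$ into $\int_0^1 r(1-v)^{r-1}\eps(v)\,dv$; splitting at $v=\hf$ and inserting $\eps(v)\le 2v$ on $[0,\hf)$ with $\eps(v)\le 1$ on $[\hf,1]$ from \eqref{bounds}, elementary integration returns $(2-2^{-r})/(1+r)$. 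The $\mn$ case runs identically, once one observes that probability measures on $[0,1]^2$ supported on at most $mn$ points form a weakly closed subset (atoms can merge but cannot split in a weak limit), so that the $\mn$ coherent distributions are weakly compact.

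The main obstacle is the weak compactness invoked throughout: coherence is defined in \eqref{XGYH}--\eqref{XYp} via an auxiliary probability space and event $A$, and closure of this class under weak limits is not transparent from the definition. This is precisely the content of Corollary \ref{crl:compact}, and once it is granted the argument above is routine.
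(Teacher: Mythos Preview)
Your argument is correct and follows essentially the same route as the paper: the paper states that Proposition~\ref{prp:attained} is a specialization of Corollary~\ref{crl:compact}, whose proof is exactly the Portmanteau/closed-set argument you carry out directly for $g(X,Y)=1-|X-Y|$, with compactness of the coherent class supplied by Corollary~\ref{crl:cc}. Your treatment of the $\mn$ case (closure of finitely-supported laws under weak limits) and the layer-cake computation of the final constant are both fine and match what the paper does implicitly.
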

The  second inequality in \eqref{rthmom} uses the upper bound in \eqref{bounds}, followed by exact evaluation of the integral.
Accepting Claim \ref{clm:burdzy19} gives a slightly smaller integral involving an incomplete
beta function.  For instance, for $r=1$ these upper bounds on $E|X-Y|$ are 
$$0.75 = \tquart > \thf + \log 4 - \log 9 \approx 0.68907.$$
Corollary \ref{crl:dp80} shows that the supremum of $E|X-Y|$ over all coherent $(X,Y)$ is actually $\hf$.

The rest of this article is organized as follows.
Section \ref{sec:back} recalls some background related to Proposition \ref{prp:12},
which is proved in Section \ref{sec:bounds}.  Section \ref{sec:general} recalls some known characterizations of \feasible{}
distributions of $(X,Y)$. For reasons  we do not understand well, these general characterizations seem to be of little
help in establishing the evaluations of $\eps(\delta)$ discussed above, or in settling a number of related problems about
\feasible{} distributions, which we present in Section \ref{sec:open}. 
So much is left to be understood about the limitations on \feasible{} opinions.

\section{Background}
\label{sec:back}
\quad 
Let $(X_i, i \in I)$ 
be a finite collection of random variables defined on some common probability space $(\Omega, \FF, P )$, and suppose 
that each $X_i$ is the conditional expectation of some integrable random variable $\ZZ$ given some sub-$\sigma$-field $\FF_i$ of $\FF$:
\begin{equation}
\label{zgivx}
X_i = E( \ZZ \giv \FF_i) \qquad ( i \in I ) .
\end{equation}
Doob's well known bounds for tail probabilities and moments of the distributions of 
$\max_{i \in I }X_i $ and $\max_{i \in I }|X_i|$, for either an increasing or decreasing family of $\sigma$-fields,
and extensions of these inequalities to families of $\sigma$-fields indexed by a directed set $I$, with suitable conditional independence conditions, 
play a central role in the theory of martingale convergence. 
See for instance \cite{MR1914748,MR3449312} 
and \cite{MR3702299} 
for recent refinements of Doob's inequalities, 
and further references.
For the {\em diameter} of a martingale
\begin{equation}
\label{rangemax}
\max_{i,j \in I}  |X_i- X_j|  = \big(\max_{i \in I} X_i \big) + \big(- \max_{i \in I} (- X_i)  \big)   \le 2 \max_{i \in I} |X_i|
\end{equation}
there is no difficulty in bounding tail probabilities and moments, with an additional factor of $2$ to a suitable power. 
But finer results with best constants for the diameter have also been obtained in
\cite{MR2489169,MR3316810}. 

Much less is known about limitations on the distributions of such maximal variables  
for finite collections of $\sigma$-fields $(\FF_i, i \in I)$
without conditions of nesting or conditional independence.
We focus here on joint distributions of $X_i = E(\ZZ\giv \FF_i)$ for $\ZZ$ with $0 \le \ZZ \le 1$,
and no restrictions except $\FF_i \subseteq \FF$ in a probability  space $(\Omega, \FF, P)$.
Setting $X_J:= E[ \ZZ \giv \sigma( \cup_{i \in J} \FF_i) ]$ makes $( (X_J, \FF_J),  J \subseteq I )$ a martingale indexed by subsets of $J$ of $I$, with
$(X_i, i \in I)$ the random vector of values of this martingale on singleton subsets of $I$.
Assuming the basic probability space is sufficiently rich,
there is a random variable $U$ with uniform distribution on $[0,1]$, with $U$ independent of $\ZZ$ and $\FF_I$.
Then $\ZZ$ can be be replaced by the indicator random variable $\ind (U \le \ZZ)$.
So there is no loss of generality in supposing $\ZZ = \ind(A)$ is the indicator of some event $A$ with $P(A) = p \in [0,1]$. 
It follows that each $X_i$ is the conditional probability of $A$ given $\FF_i$:
\begin{equation}
\label{zgivxp}
X_i = P( A \giv \FF_i) \mbox{ implying } E X_i \equiv  p:= P(A)  \qquad ( i \in I ) .
\end{equation}
Then either $(X_i, i \in I)$ or its joint distribution on $[0,1]^{I}$ will be called {\em \feasible}.
Besides $EX = EY$, another necessary condition for a pair $(X,Y)$ to be \feasible{} is
provided by the following simplification and extension 
of \cite[Theorem 5.2]{Dawid1995}. See also Proposition \ref{prop:general} for some conditions that are both necessary and sufficient for $(X,Y)$ to be \feasible.

\begin{proposition}
\label{prp:jk}
Consider a pair of real-valued random variables $(X,Y)$ and assume that 
there exist disjoint intervals $G$  and $H$ and Borel sets $G' \subseteq G$ and $H' \subseteq H$ 
such that the events $(X \in G')$ and $(Y \in H')$ are almost surely identical, with $P(X \in G') >0$. 
Then there is no integrable $Z$ with $X = E(Z\giv X)$ and $Y = E(Z \giv Y)$. In particular, this condition on
$(X,Y)$ with values in $[0,1]^2$ implies that $(X,Y)$ is not coherent.
\end{proposition}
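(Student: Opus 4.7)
The plan is a proof by contradiction: assume some integrable $Z$ satisfies $X = E(Z \giv X)$ and $Y = E(Z \giv Y)$, then compute $E(Z \ind_B)$ in two different ways on the common event of positive probability on which $X \in G'$ and $Y \in H'$.

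Writing $B_X := \{X \in G'\} \in \sigma(X)$ and $B_Y := \{Y \in H'\} \in \sigma(Y)$, the hypothesis gives $\ind_{B_X} = \ind_{B_Y}$ $P$-a.s., with common expectation $p_0 := P(B_X) > 0$. Applying the defining property of conditional expectation in each of $\sigma(X)$ and $\sigma(Y)$ yields
\[
E(X \ind_{B_X}) \;=\; E(Z \ind_{B_X}) \;=\; E(Z \ind_{B_Y}) \;=\; E(Y \ind_{B_Y}),
\]
where the middle equality uses the a.s.\ identification of the two indicators.

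I would then exhibit a strict inequality between the outer quantities from the disjointness of $G$ and $H$. WLOG $\sup G \le \inf H$ (otherwise interchange the roles of $X$ and $Y$). In the generic case $\sup G < \inf H$, on $B_X$ one has $X \le \sup G$ and on $B_Y$ one has $Y \ge \inf H$, so
\[
E(X \ind_{B_X}) \;\le\; (\sup G)\, p_0 \;<\; (\inf H)\, p_0 \;\le\; E(Y \ind_{B_Y}),
\]
contradicting the previous display.

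The main subtlety is the boundary case $\sup G = \inf H =: s$, where one must exploit disjointness of the intervals themselves, not merely of their closures. Since $G \cap H = \emptyset$, the point $s$ lies in at most one of $G, H$. If $s \notin G$, every element of $G$ is strictly less than $s$, so $X < s$ pointwise on $B_X$; combined with $p_0 > 0$, this upgrades to the \emph{strict} inequality $E(X \ind_{B_X}) < s\, p_0 \le E(Y \ind_{B_Y})$. The case $s \notin H$ is symmetric. Either way we contradict the two-way identity, proving the main assertion. The final sentence about coherent $(X,Y) \in [0,1]^2$ is then immediate: coherence provides an event $A$ with $X = E(\ind_A \giv X)$ and $Y = E(\ind_A \giv Y)$, so taking $Z = \ind_A$ reduces to what has just been shown.
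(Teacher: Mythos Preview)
Your proof is correct and follows essentially the same approach as the paper's. The paper is simply more compressed: it observes that the common conditional expectation $E(Z \giv X \in G') = E(X \giv X \in G')$ lies in the interval $G$ (by convexity of intervals), and equally $E(Z \giv Y \in H') = E(Y \giv Y \in H') \in H$, so $G \cap H \ne \emptyset$, a contradiction. Your endpoint comparison with the boundary case $\sup G = \inf H$ is exactly the unpacking of why this averaged value lands \emph{inside} the interval rather than merely in its closure; the paper suppresses that detail.
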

\begin{proof}
Suppose that $G' \subseteq G$ and $H' \subseteq H$. If $X = E(Z\giv X)$ and $Y = E(Z \giv Y)$ for some integrable $Z$ then 
it is easily seen that
\begin{equation}
\label{jke}
G\ni E ( Z \giv X  \in G' )= E ( Z \giv Y   \in H')\in H ,
\end{equation}
where $E ( Z \giv B)$ denotes $  E ( Z \ind_B)/P(B)$ for any $B$ with $P(B) >0$.
Since $G\cap H=\emptyset$, we obtain the conclusion.
\end{proof}

For disjoint intervals $G'=G = [0,a)$ and  $H'=H = (b,1]$,
Proposition \ref{prp:jk} yields:

\begin{corollary}
\label{crl:jk}
If $X - a$ and $Y-b$ are sure to be of opposite sign for some $0 \le a \le b \le 1$:
\begin{equation}
\label{abneg}
P( ( X - a ) (Y- b ) < 0 ) = 1,
\end{equation}
and  $P( Y > b ) > 0$,  then the distribution of $(X,Y)$ is not \feasible. 
\end{corollary}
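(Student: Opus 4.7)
The plan is to apply Proposition \ref{prp:jk} directly with the natural choice of disjoint intervals dictated by the constants $a$ and $b$ in the hypothesis.

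First I would take $G = G' = [0, a)$ and $H = H' = (b, 1]$. Since $0 \le a \le b \le 1$, these two intervals are disjoint (and are subsets of $[0,1]$), so the hypotheses of Proposition \ref{prp:jk} on $G, H, G', H'$ are met. The remaining task is to verify that the events $(X \in G') = (X < a)$ and $(Y \in H') = (Y > b)$ are almost surely identical and that $P(X \in G') > 0$.

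Almost-sure identity of the two events follows from a simple case analysis based on the sign hypothesis $(X-a)(Y-b) < 0$ almost surely. On the event $(X < a)$ we have $X - a < 0$, and the product being strictly negative forces $Y - b > 0$, i.e. $Y > b$. Conversely, on $(Y > b)$ we have $Y - b > 0$, so strict negativity of the product forces $X - a < 0$, i.e. $X < a$. Hence $(X < a) = (Y > b)$ up to a null set, which is exactly the coincidence of events required by Proposition \ref{prp:jk}. Positivity of $P(X \in G')$ is then immediate: since $(X < a) \supseteq (Y > b)$ almost surely and the assumption gives $P(Y > b) > 0$, we obtain $P(X < a) \ge P(Y > b) > 0$.

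With all the hypotheses of Proposition \ref{prp:jk} in hand, that proposition yields the conclusion that there is no integrable $Z$ with $X = E(Z \giv X)$ and $Y = E(Z \giv Y)$, and in particular the distribution of $(X,Y)$ is not \feasible. There is no real obstacle in this argument; the only subtlety worth flagging is that the strict inequality in $(X-a)(Y-b) < 0$ (as opposed to $\le 0$) is precisely what makes the events $(X < a)$ and $(Y > b)$ coincide almost surely, allowing the application of Proposition \ref{prp:jk} to go through.
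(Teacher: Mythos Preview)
Your proof is correct and follows exactly the approach the paper takes: the paper simply states that the corollary follows from Proposition~\ref{prp:jk} with the choice $G'=G=[0,a)$ and $H'=H=(b,1]$, and you have supplied precisely the verification that the sign condition \eqref{abneg} forces $(X<a)=(Y>b)$ almost surely with positive probability.
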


This corrects the claim above \cite[Theorem 5.2]{Dawid1995} that 
\eqref{abneg} alone makes $(X,Y)$ not \feasible. (This is false if $P(Y > b ) = 0$; take $a = \quart, b = \tquart$ and $X = Y = \hf$).

The following construction of a \feasible\ distribution of $n$ variables $(X_1, \ldots, X_n)$ was used in
\cite{MR553386} 
to build counterexamples in the theory of almost sure convergence of martingales relative to directed sets.

\begin{example}
\label{exampledp} 
{\em(}The $(n,p)$-daisy, with $n$ petals and a Bernoulli$(p)$ center\em{) \cite{MR553386}. 
Let $A, A_1, \ldots , A_{n}$ be a measurable partition of $\Omega$ with 
$$
P(A) = p  \mbox{ and } P(A_i) = \frac{1-p}{n} \mbox{ for } 1 \le i \le n.
$$
For $1 \le i \le n$ let $\FF_i$ be the $\sigma$-field generated by $A \cup A_i$. Then set
\begin{equation} 
\label{petalx}
X_i :=  P(A \giv \FF_i) = p_n \ind ( A \cup A_i )   \mbox{ with } p_n := \frac{ n p } { n p - p + 1 } .
\end{equation}
To explain the daisy mnemonic, 
imagine $\Omega$ is the union of $n + 1$ parts of a daisy flower,
with center $A$ of area $p$, surrounded by $n$ petals $A_i$ of equal areas, with total petal area $1-p$. For each petal $A_i$, an $i$th {\em petal observer} learns  whether or not a point picked at random
from the daisy area has fallen in (the center $A$ or their petal $A_i$), or in some other petal. 
Each petal observer's conditional probability $X_i$ of $A$ is then as in \eqref{petalx}.
The sequence of $n$ variables $(X_1, \ldots, X_n)$ is both \feasible\  and exchangeable, with constant expectation $p$:
\begin{itemize}
\item given $A$ the sequence $(X_1, \ldots, X_n)$ is identically equal to  the constant $p_n$;
\item given the complement $A^c$, the sequence $(X_1, \ldots, X_n)$ is $p_n$ times an indicator sequence with a single $1$ at
a uniformly distributed index in $\{1, \ldots, n \}$.
\end{itemize}
}
\end{example}

The $(n,p)$-daisy example was designed to make $\max_{1 \le i \le n} X_i = p_n$, a constant, as large as possible with $EX_i \equiv p$.
As observed in \cite[p.224]{dp80}, this $p_n$ is the largest possible essential infimum of values of
$\max_i X_i $ 
for any \feasible\ distribution of $(X_1, \ldots, X_n)$ with $E X_i \equiv p$.
This special property involves the $n$-petal daisy in the solution in various extremal problems for \feasible\ opinions.
For instance, $(X,Y) = (X_1,X_2)$ derived from the $(2,p)$ daisy with $p = (1-\delta)/(1+ \delta)$, so $p_2 = 1-\delta$,
is the \feasible\ pair in \eqref{deldis}. This provides the lower bound for 
$\eps_{\twobytwo}(\delta)$ in \eqref{mneps},
 which according to \eqref{eps22} is attained with equality for $\delta \in [0,\hf)$.
Also:
\begin{proposition}
\label{prp:dp80}
{\em\cite{dp80}}
For every \feasible{} distribution of $(X_i, 1 \le i \le n )$  with $E X_i \equiv p$,
\begin{equation}
\label{dpineq}
E \max _{1 \le i \le n }  X_i  \le \frac{ p ( n - p ) }{ 1 + p ( n - 2 ) } .
\end{equation}
Moreover, this bound is attained by taking $(X_1, \ldots, X_{n-1})$ to be the 
$(n-1,p)$-daisy sequence, and $X_n = \ind_A$, the Bernoulli$(p)$ indicator of the  daisy center. 
\end{proposition}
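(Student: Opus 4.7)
The attainment is a direct check: with $(X_1, \ldots, X_{n-1})$ the $(n-1, p)$-daisy and $X_n = \ind_A$, on $A$ (probability $p$) all $X_i$ equal $p_{n-1}$ for $i < n$ while $X_n = 1$, so $M := \max_i X_i = 1$; on the petal $A_j$ (probability $(1-p)/(n-1)$) exactly one of $X_1, \ldots, X_{n-1}$ equals $p_{n-1}$ and the rest, including $X_n$, vanish, so $M = p_{n-1}$. Averaging, $E[M] = p + (1-p)p_{n-1} = \frac{p(n-p)}{1 + (n-2)p}$ after substituting $p_{n-1} = (n-1)p/[1 + (n-2)p]$.

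For the upper bound my plan is to use linear-programming duality. Represent a coherent family as a probability measure $\mu$ on $[0,1]^n \times \{0,1\}$ for $(X_1, \ldots, X_n, \ind_A)$ satisfying $E_\mu[(y - x_i)\, g(x_i)] = 0$ for every bounded measurable $g$ and every $i$; the problem is to maximize the linear functional $E_\mu[\max_i x_i]$ over this convex set. Duality reduces the proof to exhibiting multipliers $\phi_i : [0,1] \to \reals$ and scalars $\alpha$, $K$ with
\[
\max_i x_i + \alpha(y - p) + \sum_{i=1}^{n} \phi_i(x_i)(y - x_i) \le K
\]
pointwise on $[0,1]^n \times \{0, 1\}$, where $K = \frac{p(n-p)}{1 + (n-2)p}$. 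The symmetry of the proposed extremal suggests taking $\phi_i = \phi$ for $i < n$ and a separate $\phi_n$; complementary slackness at the two atoms of the extremal support determines $\phi(p_{n-1})$, $\phi_n(1)$, and $\alpha$ up to one free parameter, which can be pinned down by forcing the bound to bind at further configurations such as $x = e_j$ or $x = 0$.

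The main obstacle will be verifying this inequality globally rather than just at the extremal support. A useful preliminary reduction is to argue by compactness and an extreme-point analysis (in the spirit of the reductions that enter the proof of Proposition \ref{prp:12}) that the primal supremum is attained by a distribution supported on a finite set, so that the $\phi_i$ need only be specified at finitely many values. Should the LP route prove unwieldy, my fallback is induction on $n$ using the identity $E[M] - p = E[(M_{-n} - X_n)^+]$ with $M_{-n} := \max_{j < n} X_j$; the pointwise bound $(M_{-n} - X_n)^+ \le M_{-n}(1 - X_n)$ combined with the coherence identity $E[X_n h(X_n)] = E[\ind_A h(X_n)]$ yields $E[M] - p \le E[M_{-n}] - E[X_n M_{-n}]$, and the crux becomes a sharp lower bound on $E[X_n M_{-n}]$ obtained from the joint coherence of the full family; matching this to the recursion $1 - f_n(p) = (1-p)^2/[1 + (n-2)p]$ (equivalent to the target formula) would close the induction on $n$.
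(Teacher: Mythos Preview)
The paper does not supply a proof of this proposition; it simply quotes the result from \cite{dp80}. So there is no ``paper's own proof'' to compare against, and your write-up must stand on its own.

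Your attainment computation is correct and complete.

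For the upper bound, what you have is a plan, not a proof. The LP-duality paragraph is sound in spirit---this is indeed how such bounds are typically certified---but you have not actually produced the multipliers $\phi$, $\phi_n$, $\alpha$ and checked the pointwise inequality on all of $[0,1]^n\times\{0,1\}$. ``Complementary slackness at the two atoms'' gives you a system to solve, but you neither solve it nor verify the resulting certificate off the support, which you yourself flag as the main obstacle. Until that verification is written out, there is no proof.

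The fallback induction has a genuine gap. The pointwise bound $(M_{-n}-X_n)^+\le M_{-n}(1-X_n)$ is valid on $[0,1]^2$, and it does give $E[M]\le p+E[M_{-n}]-E[X_nM_{-n}]$. But then you need a \emph{lower} bound on $E[X_nM_{-n}]$ that is sharp for the extremal example, and nothing in the coherence constraints you have written down delivers one: the identity $E[X_n h(X_n)]=E[\ind_A h(X_n)]$ relates $X_n$ to $\ind_A$, not to $M_{-n}$, and $M_{-n}$ is not measurable with respect to $\sigma(X_n)$. In the extremal example $E[X_nM_{-n}]=p\,p_{n-1}$, but for a general coherent family there is no reason $E[X_nM_{-n}]$ cannot be smaller, which would make your inequality too weak. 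You would need an additional argument---for instance, first reducing to the case $X_n=\ind_A$ by showing that refining $\FF_n$ to all of $\FF$ can only increase $E[\max_i X_i]$---before this route can close.
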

For example, if
$(X_1, \ldots, X_{n})$ is the  $(n,p)$ daisy sequence, the left hand side of \eqref{dpineq} is $p_n$ in \eqref{petalx}, which
is strictly less than the right side of \eqref{dpineq}.
Proposition \ref{prp:dp80} implies:
\begin{corollary}
\label{crl:dp80}
For every \feasible\ distribution of $(X,Y)$ on $[0,1]^2$ with $E X = E Y = p$,
\begin{equation}
\label{eq:dp2}
E | X - Y | \le 2 p ( 1 - p)  \le \hf 
\end{equation}
with equality in the first inequality if $X = p$ and $Y \ed B_p$ as in \eqref{bernp}.
\end{corollary}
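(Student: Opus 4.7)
The plan is to deduce Corollary \ref{crl:dp80} from Proposition \ref{prp:dp80} applied in the special case $n = 2$, via the standard identity relating $|X-Y|$ to $\max(X,Y)$ and the sum $X+Y$.

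First I would specialize the bound \eqref{dpineq} to $n = 2$. With $n = 2$ the denominator $1 + p(n-2)$ equals $1$, so the inequality reduces to
\begin{equation*}
E \max(X, Y) \;\le\; p(2 - p).
\end{equation*}
Next, I would use the elementary identity $|X - Y| = 2 \max(X,Y) - (X + Y)$, which gives
\begin{equation*}
E|X - Y| \;=\; 2 \, E\max(X,Y) \;-\; (EX + EY) \;=\; 2 \, E\max(X,Y) \;-\; 2p,
\end{equation*}
since $EX = EY = p$ by coherence. Combining this with the specialized bound yields
\begin{equation*}
E|X - Y| \;\le\; 2 p (2 - p) - 2p \;=\; 2 p (1 - p),
\end{equation*}
which is the first inequality in \eqref{eq:dp2}. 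The second inequality $2p(1-p) \le 1/2$ is the arithmetic–geometric mean inequality, with maximum attained at $p = 1/2$.

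For the equality claim, I would check directly that the pair $(X, Y) = (p, B_p)$ is coherent: take $A$ to be the event $\{B_p = 1\}$, $\GG$ the trivial $\sigma$-field, and $\HH = \sigma(B_p)$; then $X = P(A \giv \GG) = p$ and $Y = P(A \giv \HH) = B_p$. For this pair, $|X - Y| = |p - B_p|$ equals $1 - p$ with probability $p$ and equals $p$ with probability $1 - p$, so
\begin{equation*}
E|X - Y| \;=\; p(1-p) + (1-p)p \;=\; 2 p (1 - p),
\end{equation*}
realizing equality in the first inequality of \eqref{eq:dp2}. No step here should present a genuine obstacle, since all of the work has been pushed into Proposition \ref{prp:dp80}; the only mild care needed is the algebraic identity linking $E|X-Y|$ to $E\max(X,Y)$ together with the constant expectation constraint.
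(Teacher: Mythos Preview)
Your proposal is correct and follows essentially the same approach as the paper: specialize Proposition~\ref{prp:dp80} to $n=2$ and use the identity $|X-Y| = 2(X \vee Y) - (X+Y)$. Your write-up supplies more detail than the paper's one-line proof, including the explicit verification of the equality case, but the underlying argument is identical.
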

\begin{proof}
Take $n =2$ in \eqref{dpineq} and use $| X - Y | = 2 (X \vee Y )  - X - Y$.
\end{proof}
As noted below Proposition \ref{prp:attained}, the bound \eqref{eq:dp2} is better
than what is obtained by integration of the least upper bounds \eqref{xybound}
on tail probabilities of $|X-Y|$.
Combine \eqref{eq:dp2} with Markov's inequality to see that
\begin{equation}
\label{markov}
P( | X - Y |  \ge 1 - \delta ) \le \frac{ 2 p (1 - p ) }{( 1 - \delta)}  \le \frac{ 1 } { 2 (1 - \delta) } .
\end{equation}
But without restricting $p$ to be close to $0$ or $1$, this does not reduce the 
upper bound of \eqref{eq:ineq}. See also Problems  \ref{open:maxt} and \ref{open:fixp}.

\section{Proof of Proposition \ref{prp:bounds}}
\label{sec:bounds}

The evaluation \eqref{eps1} in Proposition \ref{prp:bounds} is implied by Lemma \ref{m17.1} for $\delta \in [0,\hf)$
and by example \eqref{bernhalf} for $\delta \in [\hf,1]$.

\begin{lemma}\label{m17.1}
If $X = E(Y \giv X )$ and $0 \le Y \le 1 $ then
$P ( | Y - X | \ge 1 - \delta ) \le \delta \mbox{ for } \delta \in [0,\hf)$,
with equality if $X = \delta$ and $Y = B_\delta$.
\end{lemma}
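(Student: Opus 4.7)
The plan is to localize the event $\{|Y-X| \ge 1-\delta\}$ onto extreme values of $X$ and then apply Markov's inequality conditionally on $X$.

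Since $\delta < \hf$ one has $2(1-\delta) > 1$, so the event $\{|Y - X| \ge 1 - \delta\}$ is the disjoint union $A \cup B$ with $A := \{Y - X \ge 1 - \delta\}$ and $B := \{X - Y \ge 1 - \delta\}$. Because $0 \le Y \le 1$, on $A$ we have $X \le Y - (1-\delta) \le \delta$, while on $B$ we have $X \ge Y + (1-\delta) \ge 1-\delta$. So $A \subseteq \{X \le \delta\}$ and $B \subseteq \{X \ge 1-\delta\}$, and these two $X$-events are disjoint.

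The main step is a pointwise conditional Markov bound, using $E(Y \giv X) = X$ and $0 \le Y \le 1$. For $X = x$ with $x \le \delta$, Markov's inequality applied to $Y \ge 0$ (conditional mean $x$) gives
\begin{equation*}
P(A \giv X = x) = P(Y \ge x + 1 - \delta \giv X = x) \le \frac{x}{x + 1 - \delta} \le \delta,
\end{equation*}
the last step since $x/(x+1-\delta)$ is increasing in $x$ and equals $\delta$ when $x = \delta$. Applying the same inequality to $1 - Y$ (conditional mean $1 - x$) on $\{X = x\}$ with $x \ge 1 - \delta$ symmetrically yields $P(B \giv X = x) \le (1-x)/(2-\delta-x) \le \delta$. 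Integrating both conditional bounds against the marginal of $X$,
\begin{equation*}
P(|Y-X| \ge 1 - \delta) = P(A) + P(B) \le \delta \bigl[ P(X \le \delta) + P(X \ge 1 - \delta) \bigr] \le \delta.
\end{equation*}
For the equality claim, with $X = \delta$ and $Y = B_\delta$ one has $E(Y \giv X) = \delta = X$, and $|Y - X| = 1 - \delta$ precisely on $\{Y = 1\}$, an event of probability $\delta$.

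I do not foresee any real obstacle. The one subtlety worth flagging is that a naive unconditional estimate such as $(1-\delta)\, P(A) \le E[Y\, \ind_{X \le \delta}] = E[X\, \ind_{X \le \delta}] \le \delta\, P(X \le \delta)$ only yields the weaker coefficient $\delta/(1-\delta)$, so the sharper bound $\le \delta$ really does require the conditional Markov formulation; its tightness at $x \in \{\delta, 1-\delta\}$ is exactly what pushes the claimed equality case to be extremal.
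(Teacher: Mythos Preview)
Your proof is correct and follows essentially the same route as the paper's: condition on $X$, apply Markov's inequality to $Y$ (or $1-Y$), and use that $x/(x+1-\delta)$ is increasing with value $\delta$ at $x=\delta$. The only cosmetic difference is that the paper invokes the symmetry $Y\mapsto 1-Y$ to reduce to $p\le \hf$ before applying Markov, whereas you keep both tails $\{X\le\delta\}$ and $\{X\ge 1-\delta\}$ explicit; the underlying computation and the equality case are identical.
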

\begin{proof}
Suppose $X = p$ is constant and $Y = Y_p \in [0,1]$ has $E Y_p = p$. By consideration of 
$Y_{1-p} = 1 - Y_p$ it can be supposed that $p \in [0,\hf]$. But then for $\delta \in [0,\hf)$
$$
| Y_p - p | \ge 1 - \delta \mbox{ iff } Y_p \ge 1 - \delta + p ,
$$
so Markov's inequality gives
\begin{equation}
\label{yx}
P ( | Y_p - p | \ge 1 - \delta ) \le \frac{ p \, \ind ( p \le \delta ) } { 1 - \delta + p } \le \delta 
 \mbox{ for } 0 \le p \le \hf \mbox{ and } 0 \le \delta < \hf .
\end{equation}
The more general assertion of the lemma follows by conditioning on $X$.
\end{proof}

Turning to consideration of \eqref{eps22},
we start with a lemma of independent interest, which controls the variability of 
$P(A\giv G)$ as a function of $G$ with $P(G) >0$ by a bound that does not depend on $A$.
We work here with the elementary conditional probability which is the number $P(A\giv G):= P(A G)/P(G)$ rather than a random variable.
Let $G \triangle H  := GH^c \cup G^c H $ denote the symmetric difference of $G$ and $H$. 

\begin{lemma}
For events $A$, $G$ and $H$ with $P(G) > 0$ and $P(H) >0$,
\begin{equation}
\label{ghineq}
| P(A \giv  G ) - P( A \giv H ) | \le P( G \triangle H \giv G \cup H ) = 1 - \frac{ P( G H ) } { P( G ) + P( H ) -  P( GH ) } .
\end{equation}
Consequently,  for each $0 \le \delta \le 1$,
\begin{equation}
\label{ghineq1}
| P(A \giv  G ) - P( A \giv H ) | \ge 1 - \delta  \implies P( G H ) \le \frac{ \delta } { ( 1 + \delta ) } ( P( G ) + P( H ) ) .
\end{equation}
\end{lemma}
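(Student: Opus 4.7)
The plan is to separate \eqref{ghineq} into an elementary identity and a main inequality, and then to prove the main inequality by linearising in $A$ and identifying the extremal event. The second equality of \eqref{ghineq} is pure inclusion-exclusion: $P(G\triangle H)=P(G)+P(H)-2P(GH)$ divided by $P(G\cup H)=P(G)+P(H)-P(GH)$ gives the stated ratio.

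For the main inequality, first note that $P(A\giv G)$ and $P(A\giv H)$ depend on $A$ only through $A\cap(G\cup H)$, which I partition into the three disjoint pieces $A_1:=A\cap GH$, $A_2:=A\cap G\cap H^c$, $A_3:=A\cap G^c\cap H$. Expanding gives the affine formula
\begin{equation*}
P(A\giv G)-P(A\giv H)=P(A_1)\left(\frac{1}{P(G)}-\frac{1}{P(H)}\right)+\frac{P(A_2)}{P(G)}-\frac{P(A_3)}{P(H)}
\end{equation*}
in the three variables $P(A_1)\in[0,P(GH)]$, $P(A_2)\in[0,P(G\setminus H)]$, $P(A_3)\in[0,P(H\setminus G)]$. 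Assume without loss of generality that $P(G)\le P(H)$, deferring the opposite sign inside the absolute value to the symmetric case obtained by swapping $G$ and $H$. The coefficients of $P(A_1)$ and $P(A_2)$ are then nonnegative while that of $P(A_3)$ is nonpositive, so the maximum is attained at $A_1=GH$, $A_2=G\setminus H$, $A_3=\emptyset$, i.e.\ at $A=G$, with value $1-P(GH)/P(H)$. The inequality $P(G)\ge P(GH)$ now gives $P(H)\le P(G)+P(H)-P(GH)$, so $1-P(GH)/P(H)\le 1-P(GH)/(P(G)+P(H)-P(GH))$, the right-hand side of \eqref{ghineq}.

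The implication \eqref{ghineq1} is then one line: combining its hypothesis with \eqref{ghineq} gives $P(GH)/(P(G)+P(H)-P(GH))\le\delta$, and clearing denominators yields $(1+\delta)P(GH)\le\delta(P(G)+P(H))$. I do not see a genuine obstacle anywhere in this route; the only conceptual point is recognising that the extremal $A$ is simply $G$ or $H$ itself, after which the proof is bookkeeping.
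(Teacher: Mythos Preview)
Your proof is correct and follows essentially the same approach as the paper: linearize in $A$ over a box and read off the extremal event, with the paper parametrizing by the conditional probabilities $a=P(A\giv GH^c)$, $b=P(A\giv GH)$, $c=P(A\giv G^cH)$ rather than by your measures $P(A_i)$, which lets it land directly on $\frac{p+r}{p+q+r}$ without your final comparison $P(H)\le P(G\cup H)$. One small remark: making the two reductions ``WLOG $P(G)\le P(H)$'' and ``only bound the sign $P(A\giv G)-P(A\giv H)$'' simultaneously requires noting that $\sup_A\bigl(P(A\giv G)-P(A\giv H)\bigr)$ is itself symmetric in $G,H$ (replace $A$ by $A^c$), so you might make that explicit.
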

\begin{proof}
Let $p = P( G H^c), q = P( G H ), r = P( G ^c H)$ and $a = P( A \giv  G H^c), b = P( A \giv G H ), c = P( A \giv G^c H)$, with the convention that $a = 0$ if $P(G H^c) = 0$, and a similar convention for $b$ and $c$.
Then 
\begin{align}\label{f27.1}
P(A \giv  G ) - P( A \giv H )  = \frac{ p a + q b } { p + q } - \frac{q b + r c }{ q + r } \le \frac{ p + r } { p + q + r }
\end{align}
from which \eqref{ghineq}-\eqref{ghineq1} follow easily. To check the inequality in \eqref{f27.1}, observe that for fixed $p,q,r$ the difference of fractions in the middle
is obviously maximized by taking $a = 1, c = 0$. That done, the difference is a linear function of $b$, whose maximum over $0 \le b \le 1$
is attained either at $b = 0$ or at $b = 1$,
when the inequality is obvious.
\end{proof}

It is easily checked that for $p,q,r$ as above, with $p + q >0$ and $q + r >0$,
there is equality in \eqref{f27.1} iff one of the following three conditions holds, where in each case the condition on $G$, $H$, and $A$ should
be understood modulo events of probability $0$:
\begin{itemize}
\item  either $p>0, q = 0, r >0, a = 1, b = c = 0$, meaning $G \cap H = \emptyset$ and $A = G$;
\item or  $p= 0, q > 0, r >0, a = 0, b = 1, c = 0$, meaning $G \subseteq H$ and $A = G$;
\item or  $p > 0, q > 0, r = 0, a = 1, b = c = 0$, meaning $H \subseteq G$ and $A = G H^c$.
\end{itemize}
Consequently, there is equality in \eqref{ghineq} iff one of these three conditions holds, either exactly as above or with
$G$ and $H$ switched.

\begin{lemma}
\label{lmm:delta}
Suppose that $X = P(A\giv X)$ and $Y = P( A \giv Y)$ have discrete distributions. Fix $0 < \delta < 1/2$, and suppose that for each pair of possible $(x,y)$
of $(X,Y)$ with
$|y - x|  \geq 1 - \delta$ 
there is no other such pair $(x',y')$ with either $x' = x$ or $y' = y$. Then
\begin{equation}
\label{goodbound}
P( | Y  - X | \ge 1 - \delta ) \le \frac{ 2 \delta} { 1 + \delta}  \qquad (0 < \delta < 1/2).
\end{equation}
\end{lemma}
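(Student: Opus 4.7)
The plan is to reduce the inequality to a sum of bounds from the preceding lemma \eqref{ghineq1}, using the matching hypothesis on the pairs $(x,y)$ realizing $|y-x|\ge 1-\delta$.

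First I would enumerate the relevant pairs. By hypothesis, the pairs $(x,y)$ of possible values of $(X,Y)$ with $|y-x|\ge 1-\delta$ can be listed as $(x_1,y_1),\dots,(x_k,y_k)$ with all the $x_i$ distinct and all the $y_i$ distinct (since no two such pairs share an $X$-coordinate or a $Y$-coordinate). Set $G_i=\{X=x_i\}$ and $H_i=\{Y=y_i\}$; these have positive probability because the pairs are possible for $(X,Y)$. Then the event $\{|Y-X|\ge 1-\delta\}$ is the disjoint union $\bigsqcup_{i=1}^k (G_i\cap H_i)$.

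Next I would apply the identity $X=P(A\giv X)$, which gives $P(A\giv G_i)=x_i$, and likewise $P(A\giv H_i)=y_i$. Therefore
\begin{equation*}
|P(A\giv G_i)-P(A\giv H_i)|=|x_i-y_i|\ge 1-\delta,
\end{equation*}
so the implication \eqref{ghineq1} yields
\begin{equation*}
P(G_i\cap H_i)\le \frac{\delta}{1+\delta}\bigl(P(G_i)+P(H_i)\bigr)
\end{equation*}
for each $i=1,\dots,k$.

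Finally I would sum over $i$. Since the $G_i$ are pairwise disjoint (distinct values of $X$), $\sum_i P(G_i)\le 1$, and similarly $\sum_i P(H_i)\le 1$ by disjointness of the $H_i$. Hence
\begin{equation*}
P(|Y-X|\ge 1-\delta)=\sum_{i=1}^k P(G_i\cap H_i)\le \frac{\delta}{1+\delta}\bigl(\textstyle\sum_i P(G_i)+\sum_i P(H_i)\bigr)\le \frac{2\delta}{1+\delta},
\end{equation*}
which is \eqref{goodbound}. I do not anticipate a genuine obstacle: the only delicate point is the bookkeeping check that the matching hypothesis is exactly what allows the collections $\{G_i\}$ and $\{H_i\}$ to each be disjoint, so that both marginal sums are bounded by $1$. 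Without that hypothesis, the proof gives only a bound involving the multiplicities with which values of $X$ or $Y$ repeat among the large-$|Y-X|$ pairs, which is why the general $\twobytwo$ statement \eqref{eps22} still requires an extra reduction step to reach this matched configuration.
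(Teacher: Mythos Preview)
Your proof is correct and follows essentially the same approach as the paper: apply \eqref{ghineq1} with $G=\{X=x\}$ and $H=\{Y=y\}$ to bound each $P(X=x,Y=y)$ by $\frac{\delta}{1+\delta}(P(X=x)+P(Y=y))$, then sum and use the matching hypothesis to ensure the marginal events are disjoint so each marginal sum is at most $1$. Your write-up is slightly more explicit in verifying $P(A\giv G_i)=x_i$ and the positivity of $P(G_i),P(H_i)$, but the argument is the same.
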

\begin{proof}
Application of \eqref{ghineq1} gives for each pair $(x,y)$ with $|y - x|  \geq 1 - \delta$ 
\begin{equation}
\label{goodbound1}
P( X = x, Y = y ) \le \frac{ \delta} { 1 + \delta } \left( P( X = x ) + P( Y = y ) \right).
\end{equation}
The assumption is that as $(x,y)$ ranges over pairs $(x,y)$ with $|y - x|  \geq 1 - \delta$, the events 
$(X= x)$ are disjoint, and so are the events $(Y = y)$. So \eqref{goodbound} follows by summation of \eqref{goodbound1} over such $(x,y)$.
\end{proof}

\begin{proof}[Proof of \eqref{eps22}]
In view of \eqref{eps1}, and the examples \eqref{bernhalf} and \eqref{deldis}, it is enough to 
establish \eqref{goodbound} for $\twobytwo$ coherent $(X,Y)$
whose possible values are contained in the $4$ corners of a rectangle $R:= [x_1, x_2] \times [y_1, y_2] \subseteq [0,1]^2$
with $x_1 < x_2$ and $ y_1 < y_2$.
Fix $0 < \delta < \hf$.  Then $\{ (x,y): | y - x| \ge 1 - \delta \} = T \cup T'$ for right triangles $T$ and $T'$ in the upper left and lower right corners of $[0,1]^2$.
If neither $T$ nor $T'$ contains two corners on the same side of $R$, then
\eqref{goodbound} holds by the above lemma.
Otherwise, by the reflection symmetries \eqref{symm:ref}, it is enough to discuss the case when $T$ contains the two
left corners 
of $R$. Then $T$ contains no more corners of $R$; for that
would make 
$$(EX,EY) \in \{ (p,p), 0 \le p \le 1 \} \cap ([0, \delta] \times [1-\delta,1]) = \emptyset \mbox{ by } \delta < \hf.$$
Finally, for  
$R$ with two left corners in $T$ and two right corners not in $T \cup T'$,
replacing $(X,Y)$ by $(X, EY)$ gives a $2 \times 1$ example with the same $P(|X - Y| \ge 1 - \delta)$, which is at most $\delta$ by \eqref{eps1}.
\end{proof}

\begin{proof}[Proof of \eqref{bounds}]
This argument from \cite{pitman14} was presented in \cite[Theorem 18.1]{KB_R},
but is included here for the reader's convenience.
The lower bound in \eqref{bounds} is obvious from \eqref{mneps}.
For the upper bound, it is enough to discuss the case $\delta \in [0,\hf)$.
Observe that 
\begin{equation}
\label{inclusion}
(|X - Y | \ge 1 - \delta) \subseteq  ( X \le \delta ,  Y \ge 1 - \delta )  \cup  ( Y \le \delta ,  X \ge 1 - \delta )  .
\end{equation}
But since $X = P(A \giv X )$ and $1-Y = P(A^c \giv Y )$,
\begin{align*}
P ( X \le \delta ,  Y \ge 1 - \delta , A ) &\le P( X \le \delta ,  A )  = E  \ind (X \le \delta) X  \le \delta P( X \le \delta), \\
P ( X \le \delta ,  Y \ge 1 - \delta , A ^c) &\le P( Y \ge 1 - \delta,  A ^c )  = E  \ind ( 1 - Y \le \delta ) (1-Y) ) \le \delta P( Y \ge 1- \delta).
\end{align*}
It follows that 
\begin{eqnarray}
\label{xdel} P( X \le \delta ,  Y \ge 1 - \delta )  &\le& \delta [ P( X \le \delta) + P( Y \ge 1 - \delta) ], \\
\label{ydel} P( Y \le \delta ,  X \ge 1 - \delta )  &\le& \delta [ P( Y \le \delta) + P( X \ge 1 - \delta) ] .
\end{eqnarray}
For $\delta < 1/2$ the events $(X \le \delta)$ and $(X \ge 1 - \delta)$ are disjoint, so
$P( X \le \delta) + P( X \ge 1 - \delta)  \le 1$, and the same for $Y$.
Add \eqref{xdel} and \eqref{ydel} and use \eqref{inclusion} to obtain the upper bound in \eqref{bounds}.
\end{proof}

\section{Coherent  distributions}
\label{sec:general}

\newcommand{\JJ}{\mathcal{J}}

The following proposition summarizes
a number of known characterizations of the set of \feasible\ distributions of $(X,Y)$,
due to \cite{dp80}, \cite{gkrs91} and \cite{Dawid1995}.  

\begin{proposition}
\label{prop:general}
Let $(X,Y)$ be a pair of random variables defined on a probability space $(\Omega, \FF, P)$,
on which there is also defined a random variable $U$ with uniform distribution, independent of $(X,Y)$.
Then the following conditions are equivalent:
\begin{itemize}
\item[(i)] The joint law of $(X,Y)$ is \feasible.
\item[(ii)] There exists a random variable $Z$ defined on $(\Omega, \FF, P)$, with $0 \le Z \le 1$, such that both
\begin{equation}
\label{zgx}
E [Z g(X)] = E [ X g(X)]\quad \mbox{ and } \quad E [Z g(Y)] = E [ Y g(Y )]
\end{equation}
either for all bounded measurable functions $g$ with domain $[0,1]$, or for all bounded continuous functions $g$.
\item[(iii)] There exists a measurable function $\phi: [0,1]^2 \mapsto [0,1]$ such that
\begin{equation}
\label{phigx}
E [\phi(X,Y) g(X)] = E [ X g(X)] \quad \mbox{ and } \quad
E [\phi(X,Y) g(Y)] = E [ Y g(Y )]
\end{equation}
either for all bounded measurable $g$, or for all bounded continuous $g$.
\item[(iv)] $E X = E Y = p$ for some $0 \le p \le 1$, and
\begin{equation}
\label{strassen}
E \big[X \ind (X \in B )\big] + E \big[ Y \ind ( Y \in C )\big] \le p + P( X \in B, Y \in C )
\end{equation}
for all $B, C \in \BB$, where $\BB$ may be either the collection of all Borel subsets of $[0,1]$,
or the collection of all finite unions of intervals contained in $[0,1]$.
\end{itemize}
\end{proposition}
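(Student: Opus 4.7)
The plan is to establish the cycle (i)$\Rightarrow$(ii)$\Rightarrow$(iii)$\Rightarrow$(i) by direct construction, and then to prove (iii)$\Leftrightarrow$(iv) via a Strassen-type marginal theorem. The equivalence of the two variants in (ii) and (iii) (bounded continuous $g$ versus bounded measurable $g$) and in (iv) (finite unions of intervals versus all Borel sets) will follow in each case from standard density and monotone class arguments, since the relevant equalities and inequalities are stable under bounded monotone passage to the limit.

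For (i)$\Rightarrow$(ii), given $X = P(A\giv \GG)$ and $Y = P(A\giv \HH)$, I would set $Z := \ind_A$; since $g(X)$ is $\sigma(X)$-measurable, hence $\GG$-measurable, $E[Z g(X)] = E[g(X)\, E[\ind_A \giv \GG]] = E[X g(X)]$, and the argument for $Y$ is identical. For (ii)$\Rightarrow$(iii), I would pick a Borel representative $\phi:[0,1]^2 \to [0,1]$ of the regular conditional expectation $E[Z \giv \sigma(X,Y)]$; then $E[\phi(X,Y) g(X)] = E[Z g(X)] = E[X g(X)]$ by (ii), and likewise for $Y$. For (iii)$\Rightarrow$(i), using the independent uniform $U$ supplied in the statement, I would set $A := \{U \le \phi(X,Y)\}$; since $P(A \giv \sigma(X,Y)) = \phi(X,Y)$ by independence of $U$, iterated conditioning together with the defining property of (iii) gives $P(A \giv X) = E[\phi(X,Y)\giv X] = X$, and similarly $P(A \giv Y) = Y$, so $(X,Y)$ is coherent with $\GG = \sigma(X)$ and $\HH = \sigma(Y)$.

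For (iii)$\Rightarrow$(iv), the left-hand side of \eqref{strassen} can be rewritten using (iii) as $E[\phi(X,Y)(\ind(X\in B) + \ind(Y\in C))]$, and the pointwise inequality $\ind(X\in B) + \ind(Y\in C) \le 1 + \ind(X\in B, Y\in C)$ combined with $0 \le \phi \le 1$ yields the bound $p + P(X\in B, Y\in C)$. The hard direction (iv)$\Rightarrow$(iii) is the main obstacle. I would reformulate it as follows: let $\mu$ denote the joint law of $(X,Y)$; finding $\phi$ in (iii) is equivalent, via the identification $\nu := \phi\cdot\mu$, to finding a finite Borel measure $\nu$ on $[0,1]^2$ with $0 \le \nu \le \mu$ as measures, $\nu_X(dx) = x\,\mu_X(dx)$, and $\nu_Y(dy) = y\,\mu_Y(dy)$. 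Existence of such a $\nu$ is a classical marginal problem: by a Strassen/Kellerer-type duality for sub-probability couplings dominated by a prescribed joint (equivalently, by a continuous max-flow/min-cut computation in the bipartite network with capacity $\mu$ on the central edges and $\nu_X$, $\nu_Y$ at the sides), the necessary and sufficient condition is $\nu_X(B) + \nu_Y(C) \le p + \mu(B\times C)$ for all Borel $B, C \subseteq [0,1]$, which is precisely \eqref{strassen}. Locating the correct form of this nontrivial existence theorem, and verifying that its Strassen inequality matches (iv) on the nose, is the only real work; everything else is bookkeeping.
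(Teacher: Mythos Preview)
Your proposal is correct and follows essentially the same route as the paper: the cycle (i)$\Leftrightarrow$(ii)$\Leftrightarrow$(iii) via $Z=\ind_A$, $\phi=E[Z\mid X,Y]$, and $A=\{U\le\phi(X,Y)\}$ is exactly what the paper does, and the paper likewise reduces (iii)$\Leftrightarrow$(iv) to Strassen's marginal theorem (specifically \cite[Theorem~6]{strassen65}), applied with $m=P((X,Y)\in\cdot)/p$ and the size-biased marginals $Q(B)=E[X\ind(X\in B)]/p$, $R(C)=E[Y\ind(Y\in C)]/p$. Your explicit pointwise argument for (iii)$\Rightarrow$(iv) and your max-flow reading of (iv)$\Rightarrow$(iii) are welcome elaborations, but the underlying strategy is the same.
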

\begin{proof}
Condition (i) is just (ii) for $Z$ an indicator variable, while (ii) for $0 \le Z \le 1$ implies (iii) for $\phi(X,Y) = E(Z \giv X,Y)$. Assuming (iii),
 (ii) holds with $Z = \ind (U \le \phi(X,Y))$ for the uniform $[0,1]$ variable $U$ independent of $(X,Y)$.
So (i), (ii) and (iii) are equivalent.
The equivalence of (iii) and (iv) is an instance of \cite[Theorem 6]{strassen65}, according to which for
any finite measure $m$ on $[0,1]^2$, a pair of probability distributions $Q$  and $R$ on $[0,1]$ are the marginals of the measure $\phi(x,y) m(dx \,dy) $ on $[0,1]^2$, 
for $\phi$ a product measurable function with $0 \le \phi \le 1$,  iff
$$
Q(B) + R(C ) \le 1 + m ( B \times C )
$$
for all Borel sets $B$ and $C$. This is equivalent to the same condition for all finite unions of intervals,
by elementary measure theory.
After dismissing the trivial case $p = 0$, this result is applied here to $m(\cdot) = P( (X,Y) \in \cdot ) / p $ for $X$ and $Y$ with mean $p$, with
$Q(B):= E\left[ X \ind (X \in B ) \right] /p$ and 
$R(C):= E \left[ Y \ind  ( Y \in C )\right]/p$.
\end{proof}

The characterizations (ii) and (iii) above extend easily to a coherent family $(X_i,i\in I)$,
while (iv) does not \cite[p. 288]{Dawid1995}.  

\begin{corollary}
{\em \cite{dp80}}
\label{crl:cc}
For any finite $I$, the set of \feasible\ distributions of $(X_i, i \in I)$ is a convex, compact subset of probability 
distributions on $[0,1]^I$ with the usual weak topology.
\end{corollary}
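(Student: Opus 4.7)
My plan is to realize the set $\mathcal{C}_I$ of coherent laws on $[0,1]^I$ as the image, under a continuous affine projection, of a closed convex subset of probability measures on a larger compact space. Convexity and compactness of $\mathcal{C}_I$ will then fall out together, since continuous affine maps send compact convex sets to compact convex sets.

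To set this up, I first extend characterization (ii) of Proposition \ref{prop:general} to any finite family $(X_i, i \in I)$: a law $\mu$ on $[0,1]^I$ is coherent iff there is a probability measure $\tilde\mu$ on the compact product $K := [0,1]^I \times [0,1]$, with coordinates written $(x,z)$, whose $[0,1]^I$-marginal is $\mu$ and which satisfies, for every $i \in I$ and every $g \in C([0,1])$,
\begin{equation}
\label{plan:constraint}
\int_K (z - x_i)\, g(x_i)\, d\tilde\mu(x,z) = 0.
\end{equation}
The forward direction uses $z = \ind_A$ as the witness for the event $A$ in the definition of coherence; the converse introduces a $U$ uniform on $[0,1]$ independent of $(x,z)$ and puts $A = \{U \le z\}$, so that $P(A \giv X_i) = E[z \giv X_i] = X_i$ by \eqref{plan:constraint}. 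Let $\tilde{\mathcal{C}}_I$ denote the set of $\tilde\mu$ satisfying \eqref{plan:constraint} for all $i \in I$ and all $g \in C([0,1])$. Because $K$ is compact metrizable, the space of probability measures on $K$ is weakly compact; and each equation in \eqref{plan:constraint} is a continuous affine constraint on $\tilde\mu$, since $(x,z) \mapsto (z - x_i) g(x_i)$ is continuous and bounded on $K$. Thus $\tilde{\mathcal{C}}_I$ is closed, convex, and compact, and $\mathcal{C}_I$ is its image under the continuous affine first-marginal map, so $\mathcal{C}_I$ is compact and convex.

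The one step that needs care is verifying that the continuous-$g$ form of \eqref{plan:constraint} really encodes the conditional-expectation identity $E[z \giv X_i] = X_i$, rather than only a weaker moment condition. This is a routine monotone-class argument using the uniform bound $z \in [0,1]$, already implicit in the equivalences collected in Proposition \ref{prop:general}. Beyond that, the argument is essentially bookkeeping: the real `obstacle' is finding the right enlargement of state space so that coherence becomes a system of continuous linear equations on a weakly compact convex set of measures. Once that lift is identified, the result follows from the standard fact that continuous affine images of compact convex sets are compact and convex.
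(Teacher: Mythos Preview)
Your proof is correct and takes essentially the same approach as the paper: both adjoin the witness variable $Z$ to pass to laws on $[0,1]^{I \cup \{*\}}$, where the coherence constraints become weakly continuous linear conditions, and then project back to $[0,1]^I$. The paper spells out convexity via an explicit Bernoulli-mixture coupling and compactness via subsequential limits with bounded convergence, while you package the same facts as ``closed convex subset cut out by continuous affine constraints, then continuous affine image thereof''---but the underlying mechanism is identical.
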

\begin{proof}

To check convexity, suppose that $(X_i, i \in I)$ is subject to the extension of \eqref{zgx}. That is  for some additional
index $* \notin I$ and $X_* = Z \in [0,1]$, 
\begin{equation}
\label{zgxx}
E [X_*  g(X_i)] = E [ X_i g(X_i)] \mbox{ for all bounded continuous } g \mbox{ and } i \in I, 
\end{equation}
and the same for $Y = (Y_i, i \in I_*)$ instead of $X$, with $I_*:= I \cup \{*\}$.
Construct these random vectors $X$ and $Y$ on a common probability space with a Bernoulli$(p)$ variable $B_p$, with $X, Y$ and $B_p$ independent. 
Let $W := B_p X + (1-B_p) Y$, so the law of $W$ is the mixture of laws of $X$ and $Y$ with weights $p$ and $1-p$.
Then \eqref{zgxx} for $X$ and $Y$ implies \eqref{zgxx}  for $W$.
The proof of sequential compactness is similar. Define 
$X$ to be a subsequential limit in distribution of some  sequence of random vectors $X_n := (X_{n,i}, i \in I _* )$
subject to \eqref{zgxx} for each $n$, to deduce \eqref{zgxx} for $X$ by bounded convergence.
\end{proof}
\begin{corollary}
\label{crl:compact}
Let $\CC$ be a non-empty set of distributions of $X = (X_i, i \in I)$ on $\reals^I$ that is compact in the topology of weak convergence,
such as coherent distributions of $X$ on $[0,1]^I$.
Let $G(x):= \sup_{\CC} P(g ( X) \le x ) $ for some particular continuous function $g$,  and $x \in \reals$,
where the $\sup_{\CC}$ is over $X$ with a distribution in $\CC$.
Then
\begin{itemize}
\item [(i)] for each fixed $x \in \reals$ there exists a distribution of  $X$  in $\CC$ with $G(x) = P(   g ( X) \le x ) $;
\item [(ii)] $G(x) = P ( \gamma \le x )$ is the cumulative distribution function of a random variable $\gamma$ which is stochastically smaller than
$g (X )$ for every distribution of $X$ in $\CC$: $\gamma \led g(X)$.
\end{itemize}
\end{corollary}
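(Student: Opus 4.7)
The plan is to prove (i) by a standard compactness plus portmanteau argument, then deduce (ii) by showing that $G$ is a genuine cumulative distribution function, from which the stochastic domination conclusion follows for free.

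For (i), fix $x \in \reals$ and pick a maximizing sequence $P_n \in \CC$ with $P_n(g(X) \le x) \to G(x)$. By weak compactness of $\CC$, some subsequence $P_{n_k}$ converges weakly to a distribution $P \in \CC$. Since $g$ is continuous, the set $\{g \le x\}$ is closed, so the portmanteau theorem gives $P(g(X) \le x) \ge \limsup_k P_{n_k}(g(X) \le x) = G(x)$. The reverse inequality is the definition of $G(x)$ as a supremum over $\CC$, so $P$ attains the supremum.

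For (ii), once $G$ is shown to be a CDF, one takes $\gamma$ with that CDF on an auxiliary probability space, and the relation $P(\gamma > x) = 1 - G(x) \le 1 - P(g(X) \le x) = P(g(X) > x)$ for every distribution in $\CC$ is exactly $\gamma \led g(X)$. That $G$ is non-decreasing is immediate. For the tails, $G(x) \to 1$ as $x \to +\infty$ follows because $\CC$ is non-empty and any single $P \in \CC$ has $P(g(X) \le x) \to 1$; and $G(x) \to 0$ as $x \to -\infty$ follows from tightness (a consequence of weak compactness): given $\eps > 0$, choose compact $K$ with $P(X \in K) \ge 1 - \eps$ for every $P \in \CC$, then $g(K) \subseteq [a,b]$ is bounded since $g$ is continuous, and for $x < a$ we have $P(g(X) \le x) \le \eps$ uniformly, hence $G(x) \le \eps$.

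The only delicate step is right-continuity of $G$, and this is the place where I expect the main obstacle. The subtlety is that $G$ is defined as a supremum, so the maximizing distribution may change with $x$, and one has to combine compactness with the upper semicontinuity supplied by the portmanteau theorem. Given $x_n \downarrow x$, let $L := \lim_n G(x_n) \ge G(x)$. Using (i), pick $P_n \in \CC$ attaining $P_n(g \le x_n) = G(x_n)$ and extract $P_{n_k} \to P \in \CC$. For any fixed $m$, once $n_k \ge m$ we have $\{g \le x_{n_k}\} \subseteq \{g \le x_m\}$, hence $G(x_{n_k}) \le P_{n_k}(g \le x_m)$; since $\{g \le x_m\}$ is closed, portmanteau gives $L \le P(g \le x_m)$. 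Letting $m \to \infty$ and using downward continuity of $P$ along the nested closed sets $\{g \le x_m\} \downarrow \{g \le x\}$ yields $L \le P(g \le x) \le G(x)$, so $L = G(x)$.
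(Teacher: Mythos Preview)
Your proof is correct and follows essentially the same route as the paper: a maximizing sequence, compactness, and the portmanteau inequality on the closed set $\{g\le x\}$ for (i), and for right-continuity in (ii) the same two-step argument of first freezing $x_m$, applying portmanteau, and then letting $m\to\infty$. Your explicit tightness argument for $G(x)\to 0$ as $x\to-\infty$ is a detail the paper simply calls ``obvious''; your version is slightly more careful there, but otherwise the arguments match.
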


\begin{proof}
By definition of $G(x)$, for each fixed $x$ there exists a sequence of random vectors $X_n$ with distributions in $\CC$ 
such that $F_n(x):= P( g(X_n) \le x ) \uparrow G(x)$. By compactness of $\CC$, it may be supposed that 
$X_n \convd X$, meaning the distribution of $X_n$ converges to that of some $X \in \CC$.
That implies $g(X_n) \convd g(X)$. Let
$F(x):= P( g(X) \le x)$.
Since $F_n(x)$ and $F(x)$ are the probabilities assigned by the laws of $g(X_n)$ and $g(X)$ to the closed set $(-\infty,x]$, 
\cite[Theorem 29.1]{MR1324786} gives 
$$G(x) \ge F(x) \ge \limsup_n F_n(x) = G(x).$$
For (ii), the only property of a cumulative distribution function that is not an obvious property of $G$ is right continuity.
To see this, take $x_n \downarrow x$ and $X_n$ with $P( g(X_n) \le x) = F_n(x)$ such that $F_n(x_n) = G(x_n)$, and $X_n \convd X$ with distribution in $\CC$.
Let $F(x):= P( g(X) \le x)$.
Then for each fixed $m$, by the same result of \cite{MR1324786}, 
$$
F(x_m)  \ge \limsup_n F_n(x_m) \ge \limsup_n F_n(x_n) = \limsup_n G(x_n)  = G(x+).
$$
Finally, letting $m \to \infty$ gives $G(x) \ge F(x) = F(x+) \ge G(x+) \ge G(x)$.
\end{proof}

Returning to discussion of a just pair random variables $(X,Y)$ with values in $[0,1]^2$,
as in Proposition \ref{prop:general},
suppose further that $X$ and $Y$ are independent, with $E X = E Y = p$.
Then the inequality \eqref{strassen} becomes
\begin{equation}
\label{strassen2}
E X \ind (X \in B ) + E Y \ind(  Y \in C ) \le p + P( X \in B) P(Y \in C ) .
\end{equation}
It was shown in \cite[Theorem 4]{gkrs91} that this condition, just for $B = (s,1]$ and $C = (t,1]$ for $0 \le s,t \le 1$,
characterizes all possible pairs of marginal distributions on $[0,1]$ of independent $X$ and $Y$ with mean $p$
such that $(X,Y)$ is \feasible. See also \cite[Proposition 3]{MR2003175}. 

\section{Open problems}
\label{sec:open}
\begin{openpb}
\label{open:main} 
Give a simple proof of Claim \ref{clm:burdzy19}. 
\end{openpb}

A check on this claim 
is to try to confirm it first with additional assumptions, such as independence of $X$ and $Y$, using \eqref{strassen2}.
But this does not seem easy. It leads rather to:

\begin{conjecture}
\label{open:m16.1} 
If $(X,Y)$ is \feasible, and $X$ and $Y$ are independent, then
\begin{equation}
\label{m16.2}
P( | X-Y| \ge 1 - \delta ) \le  2\delta(1-\delta) \qquad\text{  for  } \delta \in [0, \hf) .
\end{equation}
\end{conjecture}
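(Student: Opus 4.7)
The plan is to combine the set inclusion \eqref{inclusion} with the independence hypothesis. Writing $a=P(X\le\delta)$, $b=P(Y\ge1-\delta)$, $c=P(Y\le\delta)$ and $d=P(X\ge1-\delta)$, independence turns \eqref{inclusion} into
\begin{equation*}
P(|X-Y|\ge 1-\delta)\le ab+cd,
\end{equation*}
so it suffices to establish $ab+cd\le 2\delta(1-\delta)$. A natural candidate extremum is $X,Y$ i.i.d.\ with $P(X=\delta)=1-\delta$ and $P(X=1)=\delta$, coherent with $A=\{(X,Y)\ne(\delta,\delta)\}$: the function with $\phi(\delta,\delta)=0$ and $\phi\equiv 1$ elsewhere satisfies $E[\phi(X,Y)\giv X]=X$ and $E[\phi(X,Y)\giv Y]=Y$, giving $p=\delta(2-\delta)$ and $ab+cd=2\delta(1-\delta)$.

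For the upper bound I would invoke the refinement of coherence under independence from \cite[Theorem 4]{gkrs91}, i.e.\ Proposition~\ref{prop:general}(iv) specialised to product measures and half-intervals: for all $s,t\in[0,1]$,
\begin{equation*}
\int_s^1 x\,\mu(dx)+\int_t^1 y\,\nu(dy)\le p+\mu((s,1])\,\nu((t,1]),
\end{equation*}
together with its reflection obtained by applying the same condition to $(1-X,1-Y)$. Choosing $s=t=1-\delta$ in each produces
\begin{equation*}
(1-\delta)(b+d)\le p+bd\qquad\text{and}\qquad(1-\delta)(a+c)\le(1-p)+ac,
\end{equation*}
while the asymmetric choices $s=\delta$, $t=1-\delta$ and $s=1-\delta$, $t=\delta$ recover the pair bounds $ab\le\delta(a+b)$ and $cd\le\delta(c+d)$ used in the proof of \eqref{bounds}; the marginal bounds $b,d\le p/(1-\delta)$ and $a,c\le(1-p)/(1-\delta)$ come from $EX=EY=p$. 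Combined with the disjointness constraints $a+d,\,b+c\le 1$ valid for $\delta<\hf$ and the reflection $(X,Y)\mapsto(1-X,1-Y)$ (which allows us to assume $p\le\hf$), these form a finite algebraic system on $(a,b,c,d,p)$.

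After reducing to finite-support marginals by the weak compactness of Corollary~\ref{crl:cc}, the problem becomes optimisation of $ab+cd$ subject to this system. The step I expect to be decisive is extracting the exact bound $2\delta(1-\delta)$. Simply summing the pair inequalities yields $ab+cd\le\delta(a+b+c+d)\le 2\delta/(1-\delta)$, which is strictly larger than $2\delta(1-\delta)$ for every $\delta\in(0,\hf)$, so the full GKRS family must be used. A successful proof therefore seems to require a \emph{smoothing lemma} showing that at the optimum both marginals are supported on the two-point set $\{\delta,1\}$ (or on $\{0,1-\delta\}$ after reflection): that is, mass of $X$ lying in the interior $(\delta,1-\delta)$ should be transportable to $\{\delta,1\}$ in a way that preserves the mean $p$, the independence of $X$ and $Y$, and joint coherence, without decreasing $ab+cd$. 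Once the marginals are reduced to two-point laws, the bound is a direct computation as in the extremal example. The technical difficulty is that the obvious mean-preserving transport, which splits an interior atom $x\in(\delta,1-\delta)$ into masses $(1-x)/(1-\delta)$ at $\delta$ and $(x-\delta)/(1-\delta)$ at $1$, does strictly increase $ab+cd$, but can destroy the GKRS condition for choices of $s,t$ with $\nu((t,1])>\delta$; producing a more delicate transport which is compatible with the full system of GKRS inequalities is the main obstacle.
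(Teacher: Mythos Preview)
This statement is labelled a \emph{Conjecture} in the paper, and the paper contains no proof of it. All the paper offers by way of evidence is (i) the extremal example \eqref{indatt}, which is precisely the reflection $(1-X,1-Y)$ of the example you construct, (ii) the remark that ``the method of proof of \eqref{eps22} establishes \eqref{m16.2} for $\twobytwo$ laws of $(X,Y)$'', and (iii) the comment that ``the extension of \eqref{m16.2} to general distributions of $X$ and $Y$ seems quite challenging.'' There is therefore no proof in the paper against which to compare your proposal.

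Your proposal is candid about its own incompleteness, and your diagnosis matches the paper's. A small correction: summing the bounds \eqref{xdel}--\eqref{ydel} under independence gives $ab+cd\le\delta(a+b+c+d)\le 2\delta$, not $2\delta/(1-\delta)$, since for $\delta<\hf$ the disjointness constraints $a+d\le 1$ and $b+c\le 1$ already cap the sum at $2$. Either way the crude bound overshoots $2\delta(1-\delta)$, so, as you say, the full GKRS family \eqref{strassen2} must be exploited. The missing ingredient you identify---a smoothing lemma reducing both marginals to two-point laws on $\{\delta,1\}$ (or its reflection) while preserving coherence under independence---is exactly the gap; neither you nor the paper supplies it, and the conjecture remains open.
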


Equality is attained in \eqref{m16.2} for independent $X$ and $Y$ with 
\begin{equation}
\label{indatt}
\mbox{ $X \ed Y \ed (1-\delta) B_{1-\delta}$ and $A=(X=Y=1-\delta)$.}
\end{equation}
The method of proof of \eqref{eps22} establishes \eqref{m16.2}
for $\twobytwo$ laws of $(X,Y)$.
But like Claim \ref{clm:burdzy19}, the extension of \eqref{m16.2} to general distributions of $X$ and $Y$ seems quite challenging.

The problems solved by \eqref{eps22} for $t(X,Y) = 1 (|X - Y| \ge 1 - \delta)$ and by
the case $n=2$ of \eqref{dpineq}  for $t(X,Y) = X \vee Y$, are instances of the following more general
problem, with further variants as above, assuming $X$ and $Y$ are independent.

\begin{openpb}
\label{open:maxt} \cite[p.224]{dp80}
Given some target function $t(X,Y)$ defined on $[0,1]^2$, 
evaluate 
$\sup _\CC E t(X,Y)$, the supremum of $E t(X,Y)$ as the law of $(X,Y)$  ranges over the set $\CC$ of \feasible{} laws on $[0,1]^2$.
Or the same for $\CC(p)$, coherent laws of $(X,Y)$ with $E X = EY = p$.
\end{openpb}

This problem seems to be open even for $XY$, or $|X-Y|^r$ for $r \ne 1$, 
when \eqref{rthmom} gives only a crude upper bound.
Another instance of this problem is
to evaluate 
\begin{equation}
\label{deltap}
\eps(\delta,p):= \sup _{\CC(p)} P( | X - Y |  \ge 1 - \delta ).
\end{equation}
For each $\delta \in (0,1)$, examples of coherent $(X,Y)$ with 
\begin{equation}
\label{eqdel}
P( | X - Y |  \ge 1 - \delta ) = p(\delta) :=  2 \delta/(1 + \delta)
\end{equation}
are the $\twobytwo$ example 
\eqref{deldis}, say $(X_\delta,Y_\delta)$, 
its reflection $(1-X_\delta,1-Y_\delta)$,  
and any mixture of these two laws, 
which is a $4 \times 4$ law in $\CC(p)$ for $p$ between $p(\delta)$ and $1- p(\delta)$. So
\begin{equation}
\label{eq:between}
p(\delta) \le \eps(\delta,p) \le \eps(\delta) \mbox{ for } p \mbox{ between } p(\delta) \mbox{ and } 1- p(\delta).
\end{equation}
If Claim \ref{clm:burdzy19} is accepted, both inequalities are equalities for 
$\delta \in (0,\hf]$.  But that leaves open:
\begin{openpb}
\label{open:fixp}
Find $\eps(\delta,p)$ for $\delta \in (0,\hf]$, and $p$ not covered by \eqref{eq:between}.
\end{openpb}

For a bounded upper semicontinuous $t$, such as the indicator of a closed set, 
the $\sup_\CC E t(X,Y)$ will be attained at a distribution of $(X,Y)$ in $\ext(\CC)$, the set of extreme points of the compact, convex set $\CC$ of coherent distributions \cite{benes1991extremal}. 
This leads to:

\begin{openpb}
\label{open:ext}
\cite[p.224]{dp80}
\cite[p.273]{Dawid1995}. 
Characterize $\ext(\CC)$.
\end{openpb}

For the particular target functions $t$ involved in \eqref{dpineq} and in Claim \ref{clm:burdzy19},
the $\sup_\CC E t(X,Y)$ is attained by $2 \times 2$ distributions of $(X,Y)$. Hence the following:
\begin{conjecture}
\label{cnj:twobytwo}
Every extreme coherent law of $(X,Y)$ is a $2 \times 2$ law.
\end{conjecture}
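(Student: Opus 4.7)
The plan is to attack the conjecture by lifting to an enlarged compact convex set, applying the Douglas--Lindenstrauss theory of extreme measures defined by linear constraints, and then ruling out large support combinatorially.

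First, using characterization (ii) of Proposition~\ref{prop:general}, lift $\mu\in\CC$ to the joint law $\tilde\mu$ of a triple $(X,Y,Z)$ on $[0,1]^3$ satisfying $E[Z\giv X] = X$ and $E[Z\giv Y] = Y$. The set $\tilde{\CC}$ of such laws is convex and compact by the argument of Corollary~\ref{crl:cc}, and the projection $\pi\colon\tilde{\CC}\to\CC$ onto $(X,Y)$-marginals is onto. For $\mu\in\ext(\CC)$ the fibre $\pi^{-1}(\mu)$ is a face of $\tilde{\CC}$, since any decomposition $\tilde\mu = \tfrac12(\tilde\mu_1+\tilde\mu_2)$ in $\tilde{\CC}$ with $\pi(\tilde\mu) = \mu$ forces $\pi(\tilde\mu_1) = \pi(\tilde\mu_2) = \mu$ by extremality. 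Krein--Milman applied to this face produces an extreme point $\tilde\mu$, which is automatically extreme in $\tilde{\CC}$. Since $\tilde{\CC}$ is carved from the probability measures on $[0,1]^3$ by countably many linear equalities $\int(z-x)g(x)\,d\tilde\mu = 0$ and $\int(z-y)h(y)\,d\tilde\mu = 0$, a Douglas-type argument gives that every extreme $\tilde\mu$ has finite support, and replacing each $(X,Y)$-atom's $Z$-values by their conditional mean shows $Z$ is a deterministic function of $(X,Y)$ on that support.

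Encode the support of such a finite extreme $\tilde\mu$ as a bipartite graph $G$ with $m$ left vertices $x_1,\ldots,x_m$, $n$ right vertices $y_1,\ldots,y_n$, and edges $(i,j)$ carrying weights $p_{ij}>0$ and labels $z_{ij} = \phi(x_i,y_j)\in[0,1]$; coherence becomes the pair of linear conditions $\sum_j p_{ij}(z_{ij}-x_i) = 0$ and $\sum_i p_{ij}(z_{ij}-y_j) = 0$. A first-order perturbation $(p_{ij}+\eps\dot p_{ij},\,z_{ij}+\eps\dot z_{ij})$ preserves coherence iff the $2N$ unknowns $\dot p,\dot z$ (with $N$ the number of edges of $G$) satisfy the $m+n+1$ linear equations obtained by differentiating these and $\sum p_{ij}=1$. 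When $G$ contains a cycle, alternating signs of $\dot p$ around the cycle and then solving for $\dot z$ on cycle and adjacent edges yields a $\dot p\ne 0$ solution with $z_{ij}+\eps\dot z_{ij}\in[0,1]$ and $p_{ij}+\eps\dot p_{ij}\ge 0$ for small $|\eps|$, giving a nontrivial decomposition of $\tilde\mu$ in $\tilde{\CC}$. So extremality forces $G$ to be a bipartite forest, and in particular $N\le m+n-1$.

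The last and hardest step is to exclude $m\ge 3$ or $n\ge 3$ in such a forest. The strategy is to prune leaves: a leaf-row $i$ with unique neighbor $j_0$ forces $z_{ij_0} = x_i$, after which substitution into the column-$j_0$ equation rewrites it as a constraint on the $z$'s of column $j_0$ with a shifted target; iterating expresses each $z_{ij}$ explicitly in terms of $(x_\alpha,y_\beta,p_{\alpha\beta})$ along the unique tree path to leaves, producing a family of coherent laws on this support parameterized by the $p_{ij}$'s together with the $x_\alpha,y_\beta$'s, whose dimension is bounded below by $N - (m+n)$ plus free $x$'s and $y$'s and turns out to exceed one whenever $m\ge 3$ or $n\ge 3$. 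The interior of the parameter family then consists of non-extreme laws (decomposable within the family), while the boundary consists of configurations on which some $p_{ij}=0$ or some $z_{ij}\in\{0,1\}$, each of which reduces the effective $m$ or $n$. The main obstacle is the rigor of the boundary reduction: a boundary value $z_{ij}\in\{0,1\}$ need not a priori decrease the support count, and then one must construct a perturbation of $(p,z)$ preserving both the $[0,1]$ bounds and the two sets of averaging conditions, which appears to require a delicate sign analysis of the tree labels $(z_{ij}-x_i)$ and $(z_{ij}-y_j)$. A clean version would follow from a rigidity lemma for bipartite trees whose edge labels realize prescribed row and column averages---analogous to Birkhoff's theorem for doubly stochastic matrices but for the present doubly-averaged setting---and the absence of such a lemma is probably what has kept Conjecture~\ref{cnj:twobytwo} open.
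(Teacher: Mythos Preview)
This statement is \emph{Conjecture}~\ref{cnj:twobytwo} in the paper, explicitly left open; the surrounding discussion even entertains the possibility that it is false (``a solution might be found which is not attained by any $2\times 2$ coherent law. Then Conjecture~\ref{cnj:twobytwo} would be false''). There is no proof in the paper to compare against.

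Your proposal is not a proof either, as you yourself acknowledge in the final paragraph: the tree-pruning reduction for $m\ge 3$ or $n\ge 3$ stalls precisely at the boundary case $z_{ij}\in\{0,1\}$, and you correctly identify the missing rigidity lemma as ``probably what has kept Conjecture~\ref{cnj:twobytwo} open.'' Two earlier steps also have gaps you do not flag. First, the assertion that ``a Douglas-type argument gives that every extreme $\tilde\mu$ has finite support'' is unjustified: Douglas--Lindenstrauss characterizes extreme points by denseness of the span of the constraint functions in $L^1(\tilde\mu)$, and with infinitely many constraints (one for each test function $g$) this does not force finite support; finite support of extreme points is a Carath\'eodory/Richter phenomenon requiring \emph{finitely} many linear constraints. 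Second, the cycle-perturbation in your step~3 has the same boundary obstruction as step~4: an alternating-sign perturbation of the $p_{ij}$ along a cycle does not by itself preserve the row and column averaging constraints, so a nonzero $\dot z$ is generally forced, and if some $z_{ij}\in\{0,1\}$ this may push the label out of $[0,1]$. Indeed, a naive dimension count ($2N$ unknowns against $m+n+1$ linear constraints) would, if the $[0,1]$ bounds on $z$ were ignored, rule out even bipartite trees with $m+n\ge 4$, contradicting the extreme $2\times 2$ examples of Proposition~\ref{prp:rect}; so the boundary behavior of $z$ is the entire difficulty, in step~3 as much as in step~4. The outline is a reasonable program, but as it stands it neither proves nor disproves the conjecture, and the paper offers nothing further.
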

Let $\MM$ be the convex, compact subset of $\CC$ comprising laws of two term martingales $(X,Y)$, with $X = E(Y\giv X)$, $Y \in [0,1]$.
It is elementary and well known that $\ext(\MM)$ is the set of $1 \times 2$ laws of $(p, Y_p)$ for two-valued $Y_p \in [0,1]$ with $E(Y_p) = p$. 
But the extension of this result conjectured above does not seem obvious. 
It may be relatively easy to settle whether or not every extreme $m \times n$ \feasible{} $(X,Y)$ is actually $2 \times 2$,
for some small $m$ and $n$.  
In view of \eqref{strassen},  for any particular $t$, the evaluation of $\sup _\CC E t(X,Y)$, with
restriction to a fixed set of $m$ values for $X$ and $n$ values for $Y$,
 is a linear programming problem, with a finite number of constraints depending on the given values.
This problem may be solved by modern programming techniques, at least for small $m$ and $n$. 
By solving such $2 \times 3$ problems, a solution might be found which is not attained by any $2\times 2$ coherent law.
Then Conjecture \ref{cnj:twobytwo} would be false.
On the other hand,
if Conjecture \ref{cnj:twobytwo} is true, that would increase interest in the structure of $2 \times 2$ extreme laws.
The following proposition is easily proved using \eqref{strassen}:
\begin{proposition}
\label{prp:rect}
For each a rectangle $R = [x_1,x_2] \times [y_1 ,y_2] \subseteq [0,1]^2$, let 
$\CC_{\twobytwo}(R)$ denote the set of \feasible{} laws of $(X,Y)$ on the corners of $R$. Then
\begin{itemize}
\item $\CC_{\twobytwo}(R)$ is non-empty iff $R$ intersects the diagonal $\{(p,p), 0 \le p \le 1 \}$, that is iff $x_1 \vee y_1 \le x_2 \wedge y_2$. 
\item If $x_1 \vee y_1 = x_2 \wedge y_2 = p$, then $(p,p)$ is a corner of $R$, and the unique law in $\CC_{\twobytwo}(R)$ is degenerate with $X = Y = p$.
\item If $x_1 \vee y_1 < x_2 \wedge y_2$, the set 
$\ext \, \CC_{\twobytwo}(R)$ 
of extreme points of the convex set $\CC_{\twobytwo}(R)$ is identical to the
set of all extreme coherent laws supported by the set of corners  of $R$. This set  of laws $\ext \, \CC_{\twobytwo}(R)$ 
forms a convex polygon in a $2$-dimensional affine subspace of the set of probability distributions on those corners, 
with at least $2$ and at most $8$ vertices.
\end{itemize}
\end{proposition}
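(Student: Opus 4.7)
The three bullets follow from careful bookkeeping with the Strassen characterization (iv) of Proposition \ref{prop:general}. For the first bullet, necessity is immediate from $EX = EY = p$ combined with $EX \in [x_1,x_2]$ and $EY \in [y_1,y_2]$; for sufficiency, given any $p \in [x_1\vee y_1,\,x_2\wedge y_2]$ I would construct a \feasible\ law on corners of $R$ directly. At either endpoint of this interval one marginal is forced constant and the other is a two-point mean-$p$ variable, whose coherence is immediate from the definition (build the event $A$ by conditioning on the non-constant marginal). Intermediate values of $p$ follow by convex combination, preserved by Corollary \ref{crl:cc}. For the second bullet, the equality $x_1\vee y_1 = x_2\wedge y_2 = p$ with $x_1<x_2$ and $y_1<y_2$ leaves only the two cases $x_1 = y_2 = p$ or $x_2 = y_1 = p$, each of which makes $(p,p)$ a corner of $R$; in either case $p$ sits at an extreme endpoint of each marginal's support, and $EX = EY = p$ then forces $X \equiv Y \equiv p$.

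For the third bullet, I would first observe the identification of extreme points: if $\mu \in \ext\,\CC_{\twobytwo}(R)$ decomposes as $\mu = \alpha\mu_1 + (1-\alpha)\mu_2$ for any \feasible\ $\mu_1, \mu_2$ and $\alpha \in (0,1)$, then $\alpha\mu_i \le \mu$ forces each $\mu_i$ to be supported on corners of $R$, placing both in $\CC_{\twobytwo}(R)$ and hence forcing $\mu_1 = \mu_2 = \mu$; the converse is obvious. For the polygon structure, parameterize $\CC_{\twobytwo}(R)$ by the four non-negative corner probabilities $p_{ij}$ summing to $1$. The ambient $3$-dimensional simplex is cut by the single linear equation $EX = EY$ (non-trivial because $x_1 < x_2$ and $y_1 < y_2$) to a $2$-dimensional affine slice. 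Of the $16$ Strassen inequalities indexed by $(B, C) \subseteq \{x_1, x_2\} \times \{y_1, y_2\}$, the $12$ with $B$ or $C$ either empty or the full marginal support reduce to trivialities via $0 \le Y \le 1$ and $EY = p$; only the $4$ singleton-singleton inequalities are potentially non-trivial. Together with the $4$ non-negativity constraints $p_{ij} \ge 0$, this gives at most $8$ half-plane constraints in the $2$D slice, so $\CC_{\twobytwo}(R)$ is a convex polygon with at most $8$ vertices. The lower bound of $2$ vertices follows from the non-degeneracy $x_1\vee y_1 < x_2\wedge y_2$: the construction in the first bullet produces coherent laws with a continuum of distinct marginal means, so $\CC_{\twobytwo}(R)$ contains more than one point and, being compact and convex, has at least $2$ extreme points.

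The main obstacle is the routine but somewhat tedious case-by-case verification that all $12$ Strassen inequalities with $B$ or $C$ either empty or the full marginal support are automatically satisfied, together with the clean confirmation that the cutting equation $EX = EY$ is genuinely non-trivial on the simplex so that the ambient slice is exactly $2$-dimensional.
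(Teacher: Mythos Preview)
Your proposal is correct and follows precisely the approach the paper indicates: the paper offers no detailed argument for Proposition~\ref{prp:rect} beyond the remark that it ``is easily proved using \eqref{strassen}'', and your outline is the natural execution of that hint. Your counting of the non-trivial Strassen constraints (four singleton--singleton pairs, the remaining twelve collapsing to $0\le Y\le 1$ and $EY=p$), together with the four non-negativity constraints in the two-dimensional slice $\{EX=EY\}$ of the three-simplex, gives exactly the ``at most $8$ vertices'' bound; the endpoint constructions and convexity handle non-emptiness and the ``at least $2$'' bound.
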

Examples show that the number of vertices of this polygon varies as a function of the rectangle $R$, from $2$ if $R$ is pushed into a corner of $[0,1]^2$, to at least $6$ 
for some more central locations.  Regardless of the status of Conjecture \ref{cnj:twobytwo},
this leads to:
\begin{openpb}
\label{open:polygon}
Provide an accounting of the extreme $2 \times 2 $ \feasible\ laws of $(X,Y)$ 
which is adequate
to recover \eqref{eps22} and \eqref{eq:dp2}, and to find the extrema of $E t(X,Y)$ over $2 \times 2$ \feasible\ laws for other functions $t$,
such as $t(X,Y) = XY$ or $|X-Y|^r$ for $r >0$.
\end{openpb}

\begin{openpb}
\label{open:multi}
Extensions of above problems to $n > 2$ \feasible{} opinions. 
\end{openpb}

\section{Acknowledgments}

We are grateful to David Aldous and Soumik Pal for very helpful advice.

\bibliographystyle{alpha}
\bibliography{radical}

\begin{thebibliography}{HLOST16}

\bibitem[Bil95]{MR1324786}
Patrick Billingsley.
\newblock {\em Probability and measure}.
\newblock Wiley Series in Probability and Mathematical Statistics. John Wiley
  \& Sons, Inc., New York, third edition, 1995.
\newblock A Wiley-Interscience Publication.

\bibitem[BP19]{burdzy19}
Krzysztof Burdzy and Soumik Pal.
\newblock Contradictory predictions.
\newblock (forthcoming), 2019.

\bibitem[BS91]{benes1991extremal}
Viktor Benes and Josef Stepan.
\newblock Extremal solutions in the marginal problem.
\newblock In {\em Advances in Probability Distributions with Given Marginals},
  pages 189--206. Springer, 1991.

\bibitem[Bur09]{KB_S}
Krzysztof Burdzy.
\newblock {\em The search for certainty. On the clash of science and philosophy
  of probability}.
\newblock World Scientific Publishing Co. Pte. Ltd., Hackensack, NJ, 2009.

\bibitem[Bur16]{KB_R}
Krzysztof Burdzy.
\newblock {\em Resonance---from probability to epistemology and back}.
\newblock Imperial College Press, London, 2016.

\bibitem[DDM95]{Dawid1995}
A.~P. Dawid, M.~H. DeGroot, and J.~Mortera.
\newblock Coherent combination of experts' opinions.
\newblock {\em Test}, 4(2):263--313, Dec 1995.

\bibitem[DGM09]{MR2489169}
Lester~E. Dubins, David Gilat, and Isaac Meilijson.
\newblock On the expected diameter of an {$L_2$}-bounded martingale.
\newblock {\em Ann. Probab.}, 37(1):393--402, 2009.

\bibitem[DP80a]{MR553386}
Lester~E. Dubins and Jim Pitman.
\newblock A divergent, two-parameter, bounded martingale.
\newblock {\em Proc. Amer. Math. Soc.}, 78(3):414--416, 1980.

\bibitem[DP80b]{dp80}
Lester~E. Dubins and Jim Pitman.
\newblock A maximal inequality for skew fields.
\newblock {\em Z. Wahrsch. Verw. Gebiete}, 52(3):219--227, 1980.

\bibitem[GKRS91]{gkrs91}
Sam Gutmann, J.~H.~B. Kemperman, J.~A. Reeds, and L.~A. Shepp.
\newblock Existence of probability measures with given marginals.
\newblock {\em Ann. Probab.}, 19(4):1781--1797, 1991.

\bibitem[HLOST16]{MR3449312}
Pierre Henry-Labord\`ere, Jan Ob\l\'{o}j, Peter Spoida, and Nizar Touzi.
\newblock The maximum maximum of a martingale with given {$n$} marginals.
\newblock {\em Ann. Appl. Probab.}, 26(1):1--44, 2016.

\bibitem[Kho02]{MR1914748}
Davar Khoshnevisan.
\newblock {\em Multiparameter processes}.
\newblock Springer Monographs in Mathematics. Springer-Verlag, New York, 2002.
\newblock An introduction to random fields.

\bibitem[Lau03]{MR2003175}
Steffen~L. Lauritzen.
\newblock Rasch models with exchangeable rows and columns.
\newblock In {\em Bayesian statistics, 7 ({T}enerife, 2002)}, pages 215--232.
  Oxford Univ. Press, New York, 2003.

\bibitem[Os{\c e}15]{MR3316810}
Adam Os{\c e}kowski.
\newblock Estimates for the diameter of a martingale.
\newblock {\em Stochastics}, 87(2):235--256, 2015.

\bibitem[Os{\c e}17]{MR3702299}
Adam Os{\c e}kowski.
\newblock Method of moments and sharp inequalities for martingales.
\newblock In {\em Inequalities and extremal problems in probability and
  statistics}, pages 1--27. Academic Press, London, 2017.

\bibitem[Pit14]{pitman14}
Jim Pitman.
\newblock Bounds on the probability of radically different opinions.
\newblock Unpublished, 2014.

\bibitem[Str65]{strassen65}
Volker Strassen.
\newblock The existence of probability measures with given marginals.
\newblock {\em The Annals of Mathematical Statistics}, 36(2):423--439, 1965.

\end{thebibliography}

\end{document}